\def\wrtext#1{\relax\ifmmode{\leavevmode\hbox{#1}}\else{#1}\fi}
\def\abs#1{\left|#1\right|}
\def\begeq{\begin{equation}}
	\def\endeq{\end{equation}}
\let\epsilon=\varepsilon
\def\part#1{\frac{\partial}{\partial #1}}
\def\norm#1{||\,#1\,||}
\newcommand{\OO}{{\mathcal O}}
\newcommand{\RR}{{\mathbb R}}
\newcommand{\GG}{{\mathcal G}}
\newcommand{\NN}{{\mathbb N}}
\newcommand{\CC}{{\mathbb C}}
\newcommand{\xbar}{\overline{x}}
\newcommand{\ptilde}{\widetilde{p}}
\newcommand{\CI}{{{C}^\infty}}
\newcommand{\CIb}{{{C}^\infty_b}}
\let\Im=\Imag
\DeclareMathOperator{\sgn}{sgn}
\let\Re=\Real
\newcommand{\real}{{\mathbb R}}
\newcommand{\comp}{{\mathbb C}}
\newcommand{\nat}{{\mathbb N}}
\newcommand{\dist}{\mbox{\rm dist\,}}
\renewcommand{\Re}{\mbox{\rm Re\,}}
\renewcommand{\Im}{\mbox{\rm Im\,}}
\renewcommand{\exp}{\mbox{\rm exp\,}}
\newcommand{\supp}{\mbox{\rm supp}}
\def\Re{{\rm Re\,}}
\def\Im{{\rm Im\,}}
\newtheorem{dref}{Definition}
\newtheorem{theo}{Theorem}
\newtheorem{lemma}[dref]{Lemma}
\newtheorem{prop}[dref]{Proposition}
\newtheorem{remark}[dref]{Remark}
\numberwithin{equation}{section}
\numberwithin{dref}{section}
\title{Boundary spectral estimates for semiclassical Gevrey operators}
\author{Haoren Xiong}
\email{haorenxiong@math.ucla.edu}
\address{Department of Mathematics, University of California,
	Los Angeles, CA 90095, USA}
\begin{document}
	
	\begin{abstract}
		We obtain the spectral and resolvent estimates for semiclassical pseudodifferential operators with symbol of Gevrey-$s$ regularity, near the boundary of the range of the principal symbol. We prove that  the boundary spectrum free region is of size ${\mathcal O}(h^{1-\frac{1}{s}})$ where the resolvent is at most fractional exponentially large in $h$, as the semiclassical parameter $h\to 0^+$. This is a natural Gevrey analogue of a result by N. Dencker, J. Sj{\"o}strand, and M. Zworski in the $C^{\infty}$ and analytic cases.
	\end{abstract}
	
	\maketitle
	
	\section{Introduction and statement of results}
	\label{sec:intro}
	
	In this work, we study the spectrum of a non-self-adjoint semiclassical Gevrey pseudodifferential operator, as the semiclassical parameter $h\to 0^+$. Unlike self-adjoint operators, it is well known that the spectrum of a non-self-adjoint operator may lie deep inside the range of its leading symbol as $h\to 0^+$. For instance, the complex harmonic oscillator: $-h^2\frac{d^2}{dx^2} + i x^2$ on $L^2(\RR)$, which was used by Davies \cite{Davies} as an inspiring example of non-normal differential operators, has purely discrete spectrum $\{e^{i\pi/4} h (2k+1) : k\in\NN\}$; while the range of its symbol $\xi^2 + i x^2$ on $(x,\xi)\in T^*\RR$ is the sector $\{z\in\CC : 0\leq \arg z\leq \pi/2\}$.
	
	\noindent
	More generally, Dencker, Sj{\"o}strand, and Zworski \cite{DeSjZw} considered spectral estimates for quantizations of bounded functions, with all derivatives bounded,
	\[
	p\in\CIb(\RR^{2n}):=\{u\in\CI(\RR^{2n}) : \partial^\alpha u\in L^\infty(\RR^{2n}),\ \forall\alpha\in\NN^{2n}\}.
	\]
	It has been pointed out in \cite{DeSjZw} that the case of functions whose values avoid a point in $\CC$ and tend to infinity as $(x,\xi)\to\infty$ can be reduced to this case. Let us denote the closure of range of $p$ by $\Sigma(p):= \overline{p(T^* \RR^n)}$,
	and denote by $\Sigma_\infty (p)$ the set of accumulation points of $p$ at infinity. Let $z_0\in\partial\Sigma(p)$ and suppose the principal-type condition:
	\begin{equation}
		\label{principal type condition}
		dp (x,\xi)\neq 0,\quad\text{if }\, p(x,\xi) = z_0,\ \,(x,\xi)\in T^*\RR^n.
	\end{equation}
	For every $\rho\in p^{-1}(z_0)$ with $z_0\in\partial\Sigma(p)$, let $\theta=\theta(\rho)\in\RR$ be such that $e^{-i\theta} d p$ is real at $\rho$. Let us also assume a nontrapping condition on $p$ and $z_0$ (see \cite{DeSjZw} or \cite{Sj10}): 
	\begin{equation}
		\label{dynamical condition}
		\begin{gathered}
			\text{For every $\rho\in p^{-1}(z_0)$, the complete trajectory of $H_{\Re(e^{-i\theta(\rho)} p)}$} \\
			\text{that passes through}\ \rho\ \text{is not contained in}\ p^{-1}(z_0) , 
		\end{gathered}
	\end{equation}
	where $H_f(\rho) = (f_\xi'(\rho),-f_x'(\rho))$, $\rho\in T^*\RR^{n}$, is the Hamilton vector field of $f$.
	
	\noindent
	Under these conditions it has been proved in \cite{DeSjZw} that for a semiclassical operator $P(h)$ whose principal part is given by the Weyl quantization of $p\in\CIb(\RR^{2n})$:
	\begin{equation}
		\label{eq:Weyl quantization}
		p^w(x,hD) u(x) =  \frac{1}{(2\pi h)^n}\int_{\RR^{2n}} e^{\frac{i}{h}(x-y)\cdot\theta} p\left(\frac{x+y}{2},\theta\right) u(y)\,dy \,d\theta ,\quad u\in{\mathscr S}(\RR^n),
	\end{equation}
	if $z_0\in \partial\Sigma(p)\setminus \Sigma_\infty(p)$ satisfies \eqref{principal type condition} and \eqref{dynamical condition}, then for any $M>0$ there exists $h_0(M)>0$ such that
	\begin{equation}
		\label{DSZ Thm 1.3 smooth}
		\{z\in\CC : |z-z_0|<M h \log(1/h)\} \cap \sigma(P(h)) = \emptyset,\quad 0<h<h_0(M),
	\end{equation}
	where $\sigma(P(h))$ denotes the spectrum of $P(h)$.
	
	\noindent
	Furthermore, in the case where $p$ is a bounded holomorphic function in a tubular neighborhood of $\RR^{2n}\subset\CC^{2n}$, then there exist $\delta_0,\, h_0>0$ such that 
	\begin{equation}
		\label{DSZ Thm 1.3 analytic}
		\{z\in\CC : |z-z_0|< \delta_0\} \cap \sigma(P(h)) = \emptyset,\quad 0<h<h_0 .
	\end{equation}
	
	\noindent
	If we compare the size of the spectrum free region near the boundary of $\Sigma(p)$ for bounded smooth symbols $p$ in \eqref{DSZ Thm 1.3 smooth} with that for bounded holomorphic symbols in \eqref{DSZ Thm 1.3 analytic}, we observe that the size improves from $\OO(h\log(1/h))$ to $\OO(1)$. Motivated by that, the purpose of this work is to explore the spectrum free region near the boundary of $\Sigma(p)$ for bounded Gevrey symbols $p$ (see the definition below), which can be viewed as an interpolating case between the bounded smooth and holomorphic symbols.
	
	\medskip
	\noindent
	The consideration of Gevrey (pseudo)differential operators has a long-standing tradition in the theory of PDEs, beginning with the seminal work \cite{BoKr}, see also \cite{Rodino}, \cite{Lascar1988}, \cite{Lascar1997}. Gevrey regularity problems have been studied in various contexts, including scattering theory \cite{Rouleux}, \cite{GaZw21}, the FBI transform in Gevrey classes \cite{GuedesFBIGevrey}, the propagation of Gevrey singularities in boundary value problems \cite{Lebeau}, \cite{Lascar1991}, and Gevrey pseudodifferential operators in the complex domain \cite{HiLaSjZeI, HiLaSjZeII}.
	
	\medskip
	\noindent
	Let $s> 1$. The bounded (global) Gevrey-$s$ class, denoted by $\GG_{\rm b}^s(\RR^d)$, consists of all functions $u\in C^\infty(\RR^d)$ such that there exists $C>0$ such that
	\begin{equation*}
		\label{Gevrey definition}
		|\partial^\alpha u(x) | \leq C^{1+|\alpha|} \alpha!^s,\quad \forall\alpha\in\NN^d,\ x\in \RR^d.
	\end{equation*}
	The Denjoy–-Carleman theorem \cite[Theorem 1.3.8]{hormander1985analysis} implies that when $s>1$ the Gevrey-$s$ class is non-quasianalytic, i.e.
	$\GG_{\rm c}^s(\RR^d) := \GG_{\rm b}^s(\RR^d)\cap C_{\rm c}^\infty(\RR^n) \neq \{0\}$. Therefore, there are $\GG^s$ partitions of unity.
	
	\noindent
	To study semiclassical Gevrey operators, we define Gevrey symbols as functions in $\GG_{\rm b}^s(\RR^{2n})$ that may depend on the semiclassical parameter $h\in(0,1]$. More precisely, we write $a(\cdot;h)\in \GG_{\rm b}^s(\RR^{2n})$ if and only if for some $C>0$ uniformly in $h\in(0,1]$ we have
	\begin{equation}
		\label{eq:Gevrey symbol}
		|\partial_x^\alpha \partial_\theta^\beta a(x,\theta;h)|\leq C^{1+|\alpha|+|\beta|} \alpha!^s \beta!^s , \quad \forall\alpha, \beta\in\NN^n,\ (x,\theta)\in \RR^{2n}.
	\end{equation}
	Let us then introduce semiclassical Gevrey pseudo-differential operators, which are semiclassical Weyl quantizations of $a(\cdot;h)\in\GG_{\rm b}^s(\RR^{2n})$ acting on $u\in {\mathscr S}(\RR^n)$,  
	\begin{equation*}
		a^w(x,hD;h)u(x) = \frac{1}{(2\pi h)^n}\int_{\RR^{2n}} e^{\frac{i}{h}(x-y)\cdot\theta} a\left(\frac{x+y}{2},\theta;h\right) u(y)\,dy \,d\theta .
	\end{equation*}
	We recall that $a^w(x,hD;h)$ extends to a bounded operator on $L^2(\RR^d)$ uniformly in $h\in(0,1]$, see for instance \cite[Section 4.5]{zw2012}. 
	
	\noindent
	We say that an $h$-independent function $a_0\in\GG_{\rm b}^s(\RR^{2n})$ is the principal symbol of the semiclassical symbol $a(\cdot;h)\in \GG_{\rm b}^s(\RR^{2n})$ if there exists $r(\cdot;h)\in\GG_{\rm b}^s(\RR^{2n})$ such that
	\[
	a(x,\theta;h) = a_0(x,\theta) + h r(x,\theta;h),\quad \forall (x,\theta)\in \RR^{2n},\ h\in(0,1] .
	\]
	Moreover, $a_0^w(x,hD)$ is called the principal part of $a^w(x,hD;h)$.
	
	\medskip
	\noindent
	The following is the main result of this work. 
	\begin{theo}
		\label{main theorem}
		Let $p\in\GG^s_b(\RR^{2n})$, $s>1$, and let $p^w(x,hD)$ be the principal part of a semiclassical Gevrey operator $P(h)=P(x,hD;h)$. If $p$ and $z_0\in \partial\Sigma(p)\setminus \Sigma_\infty(p)$ satisfy conditions \eqref{principal type condition} and \eqref{dynamical condition}, then there exist $h_0>0$ and $C>0$ such that
		\[
		\left\{z\in\comp : |z-z_0| < C^{-1} h^{1-\frac{1}{s}} \right\} \cap \sigma(P(h)) = \emptyset,\quad 0<h<h_0 .
		\]
		Furthermore, for $z\in\comp$ with $|z-z_0| < C^{-1} h^{1-\frac{1}{s}}$ we have the resolvent estimate:
		\begin{equation}
			\label{resolvent estimate}
			(P(h)-z)^{-1} = \OO(1)\exp(\OO(1)h^{-\frac{1}{s}}) : L^2(\RR^n)\to L^2(\RR^n),\quad 0<h<h_0 .
		\end{equation}
	\end{theo}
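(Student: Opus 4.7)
The plan is to adapt the conjugation strategy of Dencker, Sj\"ostrand and Zworski~\cite{DeSjZw} to the Gevrey-$s$ framework. After translating $z$ and multiplying by a unit constant, we may assume $z_0=0$ and that $d\Re p\neq 0$ on $p^{-1}(0)$. We construct a real-valued bounded escape function $G\in\GG^s_b(\RR^{2n})$ with $H_{\Re p}G\geq 1$ on a neighborhood of $p^{-1}(0)$, and study the conjugated operator $\tilde P(h):=e^{tG^w/h}P(h)e^{-tG^w/h}$ at the scale $t=\epsilon h^{1-1/s}$. The leading correction to the symbol is $\tfrac{t}{i}\{G,p\}$, whose imaginary part $tH_{\Re p}G\geq t$ drives the invertibility of $\tilde P(h)-z$ for $|z|<ct$, while the resolvent bound emerges from $\|e^{\pm tG^w/h}\|_{L^2\to L^2}\leq\exp(Ch^{-1/s})$.

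Construction of $G$: the nontrapping condition \eqref{dynamical condition} and $0\notin\Sigma_\infty(p)$ (so that $p^{-1}(0)$ is compact) imply that the bicharacteristic of $H_{\Re p}$ through any $\rho\in p^{-1}(0)$ leaves a fixed neighborhood of $p^{-1}(0)$ in bounded time. In local Gevrey-$s$ symplectic coordinates straightening $\Re p$ to $\xi_1$, one takes $G_{\rm loc}=x_1\chi$ with a Gevrey cutoff $\chi$. The non-quasianalyticity of $\GG^s$ for $s>1$ furnishes a Gevrey partition of unity, which allows gluing these local pieces into a global $G\in\GG^s_b(\RR^{2n})$ with $\|G\|_\infty\leq M$ and $H_{\Re p}G\geq 1$ on a suitable neighborhood of $p^{-1}(0)$.

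The technical core is a sufficiently precise Gevrey Weyl calculus, giving the symbol of $\tilde P(h)$ modulo errors of size $\exp(-ch^{-1/s})$. The formal expansion
\[
\tilde P(h)\ \sim\ P(h)+\sum_{k\geq 1}\frac{1}{k!}\,\operatorname{ad}_{tG^w/h}^{\,k}P(h)
\]
produces, at the symbol level, a double asymptotic series in iterated Poisson brackets and Weyl subprincipal corrections, whose general terms carry factorial growth of Gevrey type. Optimal truncation at $N\sim h^{-1/s}$ then yields a remainder of order $\exp(-ch^{-1/s})$, the hallmark Gevrey analogue of the polynomial error in $C^\infty$-calculus and the convergent expansion in the analytic case. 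The leading symbol of $\tilde P(h)$ is $p+\tfrac{t}{i}\{G,p\}+\OO(t^2)$, so on a neighborhood $V$ of $p^{-1}(0)$ we obtain $\Im\tilde p\geq ct$, while outside $V$ we have $|p|\geq 1/C$ and hence $|\tilde p-z|\geq 1/(2C)$ for small $h$. Combining these yields the a priori estimate $\|u\|\leq Ct^{-1}\|(\tilde P(h)-z)u\|$ for $|z|<ct$, hence the invertibility of $\tilde P(h)-z$ in that disk.

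Finally, $(P(h)-z)^{-1}=e^{-tG^w/h}(\tilde P(h)-z)^{-1}e^{tG^w/h}$, together with $\|e^{\pm tG^w/h}\|\leq\exp(Ct/h)=\exp(Ch^{-1/s})$, yields \eqref{resolvent estimate}. The main obstacle I anticipate is precisely the Gevrey Weyl calculus at the scale $t/h=h^{-1/s}$, where the conjugating parameter is no longer semiclassically small in the usual sense: one must realize $e^{\pm tG^w/h}$ as pseudodifferential operators with symbols in a controlled $\GG^s$-class, which requires combining Beals-type characterizations with sharp factorial estimates on the iterated commutators of $G^w$ with bounded Gevrey observables, together with the sharp $\exp(-ch^{-1/s})$ remainder for asymptotic summation in $\GG^s$.
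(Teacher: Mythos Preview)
Your reduction step is where the argument breaks. You claim that after ``multiplying by a unit constant'' one may assume $d\Re p\neq 0$ on $p^{-1}(0)$. But the principal-type condition \eqref{principal type condition} together with $z_0\in\partial\Sigma(p)$ only guarantees that at each $\rho\in p^{-1}(z_0)$ there is an angle $\theta(\rho)$ with $e^{-i\theta(\rho)}dp(\rho)$ real; this angle is allowed to vary over $p^{-1}(z_0)$, so no single rotation will do. The paper handles exactly this point by constructing a Gevrey \emph{multiplier} $q\in\GG^s_b(\RR^{2n})$, nonvanishing on $p^{-1}(z_0)$, such that $p_1:=i^{-1}q(p-z_0)$ satisfies $\Re p_1\geq 0$ \emph{on a full neighborhood} of $p_1^{-1}(0)$ and $d\Im p_1\neq 0$ there (Lemma~\ref{Lemma 4.1 DSZ}). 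This requires a Gevrey version of the Malgrange preparation theorem (Propositions~\ref{prop:division thm}--\ref{prop:preparation thm}), which your sketch does not touch.

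This omission propagates. Even granting your normalisation, the bound you need is $\Im\tilde p\geq ct$ on a \emph{fixed} neighborhood $V$ of $p^{-1}(0)$, but your own computation gives $\Im\tilde p=\Im p+tH_{\Re p}G+\OO(t^2)$, and nothing prevents $\Im p$ from being of order one and negative on $V\setminus p^{-1}(0)$. The crucial feature of the paper's reduction is that the sign condition $\Re p_1\geq 0$ holds on a whole neighborhood, so the perturbation $tH_{\Im p_1}G$ (with $t<0$ and $H_{\Im p_1}G<0$) only improves it; your condition $d\Re p\neq 0$ gives no such sign information.

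On the technical side, your route via conjugation by $e^{tG^w/h}$ with $t/h\sim h^{-1/s}$ is genuinely different from the paper's, and harder. The paper passes to the FBI side and replaces the conjugation by working in the weighted space $H_{\Phi_t}$ with $\Phi_t=\Phi_0+tG\circ\kappa_\phi^{-1}+\OO(t^2)$; the Toeplitz identity (Proposition~\ref{Toeplitz_identity}) then gives directly that the quadratic form of $a^w$ equals multiplication by $\widetilde a$ restricted to $\Lambda_{\Phi_t}$, up to $\OO(h)$. This bypasses entirely the problem you flag yourself, namely realizing $e^{\pm tG^w/h}$ as a pseudodifferential operator and controlling the Gevrey asymptotic sum of $\sum k!^{-1}\mathrm{ad}_{tG^w/h}^kP(h)$ at the borderline scale $t/h=h^{-1/s}$. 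Your proposed optimal truncation at $N\sim h^{-1/s}$ is the right heuristic, but turning it into a proof is substantially more work than the weighted-space argument.
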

	
	\begin{remark}
		In the context of resonances, it has been shown in \cite{Rouleux1998,Rouleux} that semiclassical Schr\"odinger operators with $\GG^s$ potentials which are dilation analytic near infinity have a resonance free region of size $\OO(h^{1-\frac{1}{s}})$ near a non-trapping energy level in the semiclassical limit $h\to 0^+$ and that the exponent $1-1/s$ is optimal by constructing a $\GG^s$ potential such that there exist resonances $E$ near a non-trapping energy level $E_0>0$ with $\Im E\approx -C h^{1-\frac{1}{s}}$, $C>0$, for $h$ sufficiently small \cite{Rouleux1998}. We can therefore infer that the exponent $1-1/s$ for the spectrum free region in Theorem \ref{main theorem} is optimal. 
	\end{remark}
	
	\noindent
	Let us also highlight a special case of Theorem \ref{main theorem}, which is commonly considered for evolution equations or semigroups $\exp(-tP/h)$, i.e. the case where
	\begin{equation}
		\label{special case}
		z_0\in i\RR,\quad \Re p \geq 0\quad \textrm{near}\ \, p^{-1}(z_0).
	\end{equation}
	The principal-type condition \eqref{principal type condition} in this case implies that
	\begin{equation}
		\label{principal type special case}
		d\Im p \neq 0,\quad d\Re p = 0\quad \textrm{on}\ \, p^{-1}(z_0).
	\end{equation}
	In view of the nontrapping condition \eqref{dynamical condition}, we assume in this case that
	\begin{equation}
		\label{nontrapping special case}
		\begin{gathered}
			\forall \rho\in p^{-1}(z_0),\quad \textrm{the maximal trajectory of $H_{\Im p}$ passing through $\rho$} \\
			\textrm{contains a point where $\Re p>0$} .
		\end{gathered}
	\end{equation}
	Under these conditions, we have the following:
	\begin{theo}
		\label{thm:special case}
		Let $p\in\GG^s_b(\RR^{2n})$, $s>1$, and let $p^w(x,hD)$ be the principal part of a semiclassical Gevrey operator $P(h)=P(x,hD;h)$. Suppose that $p$ and $z_0\in \partial\Sigma(p)\setminus \Sigma_\infty(p)$ satisfy conditions \eqref{special case}--\eqref{nontrapping special case}. Then there exist $h_0>0$ and $C>0$ such that 
		\[
		\left\{z\in\comp : |z-z_0| < C^{-1} h^{1-\frac{1}{s}} \right\} \cap \sigma(P(h)) = \emptyset,\quad 0<h<h_0 ,
		\]
		and that the estimate \eqref{resolvent estimate} holds for $z\in\comp$ with $|z-z_0| < C^{-1} h^{1-\frac{1}{s}}$.
	\end{theo}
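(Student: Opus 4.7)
The plan is to derive Theorem \ref{thm:special case} directly from Theorem \ref{main theorem} by showing that the specialized assumptions \eqref{special case}--\eqref{nontrapping special case} imply the general hypotheses \eqref{principal type condition} and \eqref{dynamical condition}. Once this verification is complete, the conclusions on the spectrum-free region and on the resolvent estimate \eqref{resolvent estimate} transfer verbatim from Theorem \ref{main theorem} to Theorem \ref{thm:special case}.

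To verify the principal-type condition at any $\rho \in p^{-1}(z_0)$, I would invoke \eqref{principal type special case}, which gives $d\Re p(\rho) = 0$ and $d\Im p(\rho) \neq 0$. Combining these yields $dp(\rho) = i\, d\Im p(\rho) \neq 0$, which is exactly \eqref{principal type condition}. Incidentally, $d\Re p \equiv 0$ on $p^{-1}(z_0)$ is automatic from \eqref{special case}: since $\Re z_0 = 0$ and $\Re p \geq 0$ near $p^{-1}(z_0)$, the function $\Re p$ attains a minimum at each point of $p^{-1}(z_0)$.

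For the nontrapping condition, since $dp(\rho)$ is purely imaginary, the angle $\theta(\rho)$ in \eqref{dynamical condition} may be chosen as $\pi/2$, whence $\Re(e^{-i\theta(\rho)} p) = \Im p$ and the relevant Hamilton vector field equals $H_{\Im p}$. Because $\Im p$ is conserved along its own Hamilton flow, a trajectory of $H_{\Im p}$ through $\rho$ is contained in $p^{-1}(z_0) = \{\Re p = 0\} \cap \{\Im p = \Im z_0\}$ if and only if $\Re p$ vanishes identically along it. The hypothesis \eqref{nontrapping special case} produces a point on that trajectory where $\Re p > 0$, which rules out the inclusion and establishes \eqref{dynamical condition}.

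The reduction is essentially bookkeeping, and no serious obstacle arises. The only mildly delicate point is the sign convention for $\theta(\rho)$: it is defined only modulo $\pi$, so the alternative choice $\theta(\rho) = -\pi/2$ reverses the direction of the flow but preserves the underlying trajectory set, leaving the nontrapping conclusion unchanged. Having verified \eqref{principal type condition} and \eqref{dynamical condition}, I would then apply Theorem \ref{main theorem} to obtain the spectrum-free disk $\{z\in\comp : |z - z_0| < C^{-1} h^{1-\frac{1}{s}}\}$ together with the resolvent bound \eqref{resolvent estimate}, thereby completing the proof of Theorem \ref{thm:special case}.
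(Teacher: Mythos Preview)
Your reduction from the special hypotheses \eqref{special case}--\eqref{nontrapping special case} to the general ones \eqref{principal type condition}--\eqref{dynamical condition} is mathematically sound, but the overall strategy is circular in the context of this paper. Look at how the paper is organized: Section~\ref{sec: prove Thm 1} proves Theorem~\ref{main theorem} \emph{by reducing it to} Theorem~\ref{thm:special case} (via the Gevrey multiplier $q$ from Lemma~\ref{Lemma 4.1 DSZ} and Proposition~\ref{prop:composed symbol}), and the key lower bound there is obtained ``repeating the steps to derive \eqref{zero not in Spec}'', which is precisely the estimate established in the proof of Theorem~\ref{thm:special case}. So Theorem~\ref{main theorem} is not available to you as an independent input; invoking it to prove Theorem~\ref{thm:special case} assumes what you are trying to show.

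The paper's actual proof of Theorem~\ref{thm:special case} is a direct argument carried out in Section~\ref{sec:complex deformation}. One constructs an escape function $G$ via Lemma~\ref{lem:escape function special case}, forms the small complex deformation $\Lambda_{tG}$ with $t=-\epsilon h^{1-1/s}$, and passes to the FBI side where $\Lambda_{tG}$ becomes $\Lambda_{\Phi_t}$ with $\Phi_t$ satisfying \eqref{eq1.5}. The Toeplitz identity (Proposition~\ref{Toeplitz_identity}) combined with the positivity \eqref{Re p on Lambda tG} and the elliptic estimate (Proposition~\ref{prop:elliptic estimate}) then yields the lower bound \eqref{zero not in Spec}, from which the spectrum-free region and the resolvent bound follow by the Fredholm property and a comparison of the weights $\Phi_t$ and $\Phi_0$. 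None of this machinery appears in your proposal, so as written the argument has a genuine gap: you would need an independent proof of Theorem~\ref{main theorem} to close it.
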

	
	\noindent
	In the case of analytic symbols, Dencker et al \cite{DeSjZw} proved \eqref{DSZ Thm 1.3 analytic} by studying the action of $P(h)$ on microlocally weighted spaces associated to a family of complex distorted {\it IR manifolds} via the FBI transform, see \cite{HelfflerSj} for the original method and \cite[Chapter 12]{Sj02}, \cite{M_book} for a detailed presentation. It is worth recalling that the key ingredient in the proof of \eqref{DSZ Thm 1.3 analytic} in \cite{DeSjZw} is a Toeplitz identity that connects the action of semiclassical pseudodifferential operators on the microlocally weighted spaces to the multiplication by the principal symbols. Such a result is essentially well known, see \cite{Sj90} for the analytic case, \cite{HeSjSt} for the smooth case, and \cite{Rouleux} for the Gevrey case. Thanks to the techniques recently developed in \cite{HiLaSjZeI}, we will use a straightforward argument to establish the Toeplitz identity in the Gevrey setting, see Section \ref{sec:review Gevrey op}. 
	
	\medskip
	\noindent
	The paper is organized as follows. In section \ref{sec:review Gevrey op}, we review and introduce some essential tools for semiclassical pseudodifferential operators with Gevrey symbols, including a Toeplitz identity in the complex domain and a composition formula in the real domain. Section \ref{sec:complex deformation} is devoted to the proof of Theorem \ref{thm:special case} by introducing small complex deformations of $\RR^{2n}$ and working on the FBI transform side. In Section \ref{sec: prove Thm 1}, we introduce a Gevrey multiplier using a version of the Malgrange preparation theorem for Gevrey functions, which allows us to reduce Theorem \ref{main theorem} to the more special Theorem \ref{thm:special case}, thus completing the proof of our main theorem.
	
	\medskip
	
	\noindent
	{\sc Acknowledgments.} The author would like to thank Michael Hitrik for many valuable insights and helpful discussions and for encouraging us to pursue the Gevrey case. The author would also like to thank Alix Deleporte for pointing out the reference \cite{Br}.
	
	\section{Review of semiclassical Gevrey operators in the complex domain}
	\label{sec:review Gevrey op}

	Let $\Phi_0$ be a strictly plurisubharmonic quadratic form on $\comp^n$ and let us set
	\begin{equation}
		\label{eq1.1}
		\Lambda_{\Phi_0} = \left\{\left(x,\frac{2}{i}\frac{\partial \Phi_0}{\partial x}(x)\right), \, x\in \comp^n\right\} \subset \comp^{2n}.
	\end{equation}
	Let us also introduce the Bargmann space
	\begin{equation}
		\label{eq1.2}
		H_{\Phi_0}(\comp^n) = {\rm Hol}(\comp^n) \cap L^2(\comp^n, e^{-2\Phi_0/h} L(dx)),
	\end{equation}
	where $L(dx)$ is the Lebesgue measure on $\comp^n$ and $0 < h \leq 1$ is the semiclassical parameter. Using the projection map $\pi_x : \Lambda_{\Phi_0}\owns(x,\xi)\mapsto x\in\comp_x^n\cong\RR^{2n}$, we identify $\Lambda_{\Phi_0}$ with $\comp_x^n$ and define the Gevrey spaces $\GG_{\rm b}^s(\Lambda_{\Phi_0})$, $\GG_{\rm c}^s(\Lambda_{\Phi_0})$. Let $a\in {\cal G}^s_b(\Lambda_{\Phi_0})$ be an $h$-independent symbol, for some $s>1$, and let $u\in {\rm Hol}(\comp^n)$ be such that
	$$
	u(x) = {\cal O}_{h,N}(1) \langle{x\rangle}^{-N} e^{\Phi_0(x)/h},
	$$
	for all $N\in \nat$. We introduce the semiclassical Weyl quantization of $a$ acting on $u$,
	\begin{equation}
		\label{eq1.3}
		a^w_{\Gamma}(x,hD_x) u(x) = \frac{1}{(2\pi h)^n}\int\!\!\!\!\int_{\Gamma(x)} e^{\frac{i}{h}(x-y)\cdot \theta} a\left(\frac{x+y}{2},\theta\right)u(y)\, dy\wedge d\theta.
	\end{equation}
	Here $\Gamma(x)\subset \comp^{2n}_{y,\theta}$ is the natural integration contour given by
	\begin{equation}
		\label{eq1.4}
		\theta = \frac{2}{i} \frac{\partial \Phi_0}{\partial x}\left(\frac{x+y}{2}\right),\quad y\in \comp^n.
	\end{equation}
	Let next $\Phi_1 \in C^{1,1}(\comp^n;\real)$ be such that
	\begin{equation}
		\label{eq1.5}
		\norm{\nabla^k(\Phi_1 - \Phi_0)}_{L^{\infty}(\comp^n)} \leq C^{-1} h^{1 - \frac{1}{s}},\quad k = 0,1,2,
	\end{equation}
	for some $C>0$ sufficiently large. We set $\displaystyle \omega = h^{1-\frac{1}{s}}$ and introduce the following $2n$--dimensional Lipschitz contour for $x\in \comp^n$,
	\begin{equation}
		\label{eq1.6}
		\Gamma^{\Phi_1}_{\omega}(x): \quad \theta=\frac{2}{i}\frac{\partial \Phi_1}{\partial x}\left(\frac{x+y}{2}\right)
		+ if_\omega(x-y),\quad y\in \comp^n,
	\end{equation}
	where
	\begin{equation}
		\label{eq1.7}
		f_\omega({z})= \begin{cases}
			\hskip10pt\overline{{z}}, \quad \, |{z}|\leq \omega,\\
			{}\\
			\displaystyle \omega |z|^{-1} \overline{{z}}, \quad |{z}| > \omega.
		\end{cases}
	\end{equation}
	
	\medskip
	\noindent
	Let $\widetilde{a}\in {\cal G}^s_b(\comp^{2n})$ be an almost holomorphic extension of $a$ such that ${\rm supp}\, \widetilde{a} \subset \Lambda_{\Phi_0} + B_{{\comp}^{2n}}(0,C_0)$, for some $C_0>0$. We remark that the existence of such an almost holomorphic extension whose Gevrey order is the same as that of $a$ is due to Carleson \cite{carleson1961universal} (see also \cite{GuedesFBIGevrey}).
	It has been established in Theorem 1.1 of~\cite{HiLaSjZeI} that for $1<s\leq 2$ (the complementary range $s>2$ will be discussed later), 
	\begin{equation}
		\label{eq1.8}
		\begin{gathered}
			a^w_{\Gamma}(x,hD_x) - \widetilde{a}^w_{\Gamma^{\Phi_1}_{\omega}}(x,hD_x) = {\cal O}(1)\, \exp\left(- C^{-1} h^{-\frac{1}{s}}\right):\\
			H_{\Phi_1}(\comp^n) \rightarrow L^2(\comp^n, e^{-2\Phi_1/h} L(dx)),
		\end{gathered}
	\end{equation}
	where $C>0$ is a constant and we have set, similarly to {\rm (\ref{eq1.2})},
	\[
	H_{\Phi_1}(\comp^n) = {\rm Hol}(\comp^n) \cap L^2(\comp^n, e^{-2\Phi_1/h} L(dx)).
	\]
	The realization
	\begin{equation}
		\label{eq1.81}
		\widetilde{a}^w_{\Gamma^{\Phi_1}_{\omega}}(x,hD_x)u(x) = \frac{1}{(2\pi h)^n}\int\!\!\!\!\int_{\Gamma^{\Phi_1}_{\omega}(x)} e^{\frac{i}{h}(x-y)\cdot \theta}\, \widetilde{a}\left(\frac{x+y}{2},\theta\right)u(y)\, dy\wedge d\theta
	\end{equation}
	satisfies
	\begin{equation}
		\label{eq1.9}
		\widetilde{a}^w_{\Gamma^{\Phi_1}_{\omega}}(x,hD_x) = {\cal O}(1): H_{\Phi_1}(\comp^n) \rightarrow L^2(\comp^n, e^{-2\Phi_1/h} L(dx)).
	\end{equation}
	
	\medskip
	\noindent
	For future reference, let us also recall the following version of the Fourier inversion formula in the complex domain, see for instance \cite{HiSj}. Let $u\in {\rm Hol}(\comp^n)$ be such that $u(x) = {\cal O}_{h}(1) \langle{x\rangle}^{N_0} e^{\Phi_0(x)/h}$, for some $N_0 \geq 0$. We have
	\begin{equation}
		\label{eq1.10}
		u(x) = \frac{1}{(2\pi h)^n}\int\!\!\!\!\int_{\Gamma^{\Phi_1}_{\omega}(x)} e^{\frac{i}{h}(x-y)\cdot \theta} u(y)\, dy\wedge d\theta.
	\end{equation}
	Indeed, \eqref{eq1.10} follows, by an application of Stokes' formula, from the corresponding Fourier inversion formula when the contour $\Gamma^{\Phi_1}_{\omega}(x)$ is replaced by the more standard contour of the form
	\[
	\theta = \frac{2}{i} \frac{\partial \Phi_0}{\partial x}\left(\frac{x+y}{2}\right) + i\overline{(x-y)}, \quad y\in \comp^n.
	\]
	In particular, \eqref{eq1.10} holds for $u\in H_{\Phi_1}(\comp^n)= H_{\Phi_0}(\comp^n)$ (they are equal as linear spaces). Similarly, for such functions, we find by Stokes' formula, writing $D_{x_j}:=i^{-1}\partial_{x_j}$,
	\begin{equation}
		\label{eq1.10.1}
		hD_{x_j} u(x) = \frac{1}{(2\pi h)^n}\int\!\!\!\!\int_{\Gamma^{\Phi_1}_{\omega}(x)} e^{\frac{i}{h}(x-y)\cdot \theta} \theta_j u(y)\, dy\wedge d\theta, \quad 1 \leq j \leq n.
	\end{equation}
	
	\medskip
	\noindent
	Setting $\displaystyle \xi_1(x) = \frac{2}{i} \frac{\partial \Phi_1}{\partial x}(x)$, we get by a Taylor expansion, when $(y,\theta) \in \Gamma^{\Phi_1}_{\omega}(x)$,
	\begin{equation}
		\label{eq1.11}
		\widetilde{a}\left(\frac{x+y}{2},\theta\right) = \widetilde{a}(\rho_0) + \partial_x\widetilde{a}(\rho_0)\cdot \Delta y +
		\partial_{\theta}\widetilde{a}(\rho_0)\cdot \Delta\theta + r(x,y,\theta),
	\end{equation}
	where we have adopted more convenient notation: $\rho_0 = (x,\xi_1(x))$, $\Delta y = \frac{y-x}{2}$, $\Delta\theta = \theta-\xi_1(x)$, and we have written 
	\begin{equation}
		\label{eq:r(x,y,theta)}
		r(x,y,\theta) := \partial_{\xbar}\widetilde{a}(\rho_0)\cdot \overline{\Delta y} + \partial_{\overline{\theta}}\widetilde{a}(\rho_0)\cdot \overline{\Delta\theta}  + \int_0^1 (1-t)\, \widetilde{a}_t^{(2)}(x,y,\theta) \,dt	.
	\end{equation}
	Here $\widetilde{a}_t^{(2)}(x,y,\theta)$ is defined by, writing $\rho_t := ( x+ t\Delta y\, ,\, \xi_1(x)+t\Delta\theta )$,
	\begin{multline}
		\widetilde{a}_t^{(2)}(x,y,\theta) = \widetilde{a}''_{xx}(\rho_t){\Delta y}\cdot{\Delta y} + 2\widetilde{a}''_{x\theta}(\rho_t) \Delta\theta\cdot{\Delta y} + \widetilde{a}''_{\theta\theta}(\rho_t)\Delta\theta\cdot \Delta\theta \\
		\quad + 2\widetilde{a}''_{x\xbar}(\rho_t)\overline{{\Delta y}}\cdot{\Delta y} + 2\widetilde{a}''_{x\overline{\theta}}(\rho_t)\overline{\Delta\theta}\cdot{\Delta y} + 2\widetilde{a}''_{\xbar\theta}(\rho_t)\Delta\theta\cdot\overline{{\Delta y}} + 2\widetilde{a}''_{\theta\overline{\theta}}(\rho_t)\overline{\Delta\theta}\cdot\Delta\theta \\
		+ \widetilde{a}''_{\xbar\xbar}(\rho_t)\overline{{\Delta y}}\cdot\overline{{\Delta y}} + 2\widetilde{a}''_{\xbar\overline{\theta}}(\rho_t)\overline{\Delta\theta}\cdot\overline{{\Delta y}} + \widetilde{a}''_{\overline{\theta}\overline{\theta}}(\rho_t)\overline{\Delta\theta}\cdot\overline{\Delta\theta} .\qquad\qquad
	\end{multline}
	We note that for $0\leq t\leq 1$,
	\begin{multline}
		\label{dist Lambda Phi_0}
		\textrm{dist}\big( \rho_t,\Lambda_{\Phi_0}\big) \leq \left|\frac{2}{i}\frac{\partial\Phi_0}{\partial x}\left((1-t)x+t\frac{x+y}{2}\right) - (1-t)\frac{2}{i}\frac{\partial\Phi_1}{\partial x}(x) - t\theta \right| \\
		\quad\quad\quad\quad\leq 2\left|(1-t)\frac{\partial(\Phi_0-\Phi_1)}{\partial x}(x)+ t\frac{\partial(\Phi_0-\Phi_1)}{\partial x}\left(\frac{x+y}{2}\right) \right| + t|f_\omega(x-y)| \\
		\leq 2\|\nabla(\Phi_1-\Phi_0)\|_{L^\infty(\comp^n)} + t\omega \leq \OO(1)h^{1-\frac{1}{s}} ;\qquad\qquad\qquad\qquad\quad\,
	\end{multline}
	We also recall from \cite[Remark 1.7]{GuedesFBIGevrey} that for any $\alpha,\beta,\gamma,\delta\in\NN^n$ there exists $A, C>0$ such that
	\begin{multline}
		\label{eq1.13}
		\abs{\partial_x^\alpha \partial_{\xbar}^\beta \partial_\theta^\gamma \partial_{\overline{\theta}}^\delta \bigl(\overline{\partial} \widetilde{a}(x,\xi)\bigr)} \leq A C^{|\alpha|+|\beta|+|\gamma|+|\delta|} \alpha!^s \beta!^s \gamma!^s \delta!^s \\
		\cdot\exp\left(-C^{-1}\textrm{dist}\big((x,\xi),\Lambda_{\Phi_0}\big)^{-\frac{1}{s-1}}\right),\quad (x,\xi)\in \comp^{2n}.
	\end{multline}
	Combing \eqref{eq:r(x,y,theta)}--\eqref{eq1.13} we conclude
	\begin{equation}
		\label{eq1.12}
		r(x,y,\theta) = {\cal O}(1)\abs{x-y}^2 + {\cal O}(1)\,\exp\bigl(- C^{-1}h^{-\frac{1}{s}}\bigr), \quad C>0.
	\end{equation}
	Here we have also used the fact that along the contour $\Gamma^{\Phi_1}_{\omega}(x)$, we have
	\[
	\abs{\theta - \xi_1(x)} \leq \norm{\nabla^2 \Phi_1}_{L^\infty(\comp^n)} \abs{x-y} + \abs{f_{\omega}(x-y)}  \leq {\cal O}(\abs{x-y}).
	\]
	Let us set
	\begin{equation}
		\label{eq1.14}
		Ru(x) = \frac{1}{(2\pi h)^n}\int\!\!\!\!\int_{\Gamma^{\Phi_1}_{\omega}(x)} e^{\frac{i}{h}(x-y)\cdot \theta}
		r(x,y,\theta)\, u(y) dy\wedge d\theta,
	\end{equation}
	we shall next check that
	\begin{equation}
		\label{eq1.15}
		R = {\cal O}(h): L^2(\comp^n, e^{-2\Phi_1/h} L(dx)) \rightarrow L^2(\comp^n, e^{-2\Phi_1/h} L(dx)).
	\end{equation}
	To this end, we consider the distribution kernel of $R$, writing
	$$
	Ru(x) = \int k(x,y;h) u(y)\, L(dy),
	$$
	we infer from the proof of Theorem 3.3 in~\cite{HiLaSjZeI} together with (\ref{eq1.12}) that 
	\begin{equation}
		\label{eq1.16}
		e^{-\frac{\Phi_1(x)}{h}} \abs{k(x,y;h)} e^{\frac{\Phi_1(y)}{h}} \leq
		{\cal O}(1) h^{-n} \bigl(\abs{x-y}^2 + e^{-C^{-1}h^{-\frac{1}{s}}}\bigr) e^{-\frac{F_{\omega}(x-y)}{2h}} 
	\end{equation}
	provided that the constant in (\ref{eq1.5}) is sufficiently large. Here, following~\cite{HiLaSjZeI}, we have set
	\begin{equation}
		\label{eq1.17}
		0 \leq F_\omega({z})= {\rm Re}\, (z\cdot f_{\omega}(z)) = \begin{cases}
			\vert{{z}}\vert^2, \quad \, |{z}|\leq \omega, \\
			{}\\
			\omega\vert{z}\vert, \quad\, |{z}| > \omega.
		\end{cases}
	\end{equation}
	In view of Schur's lemma, we only have to control the $L^1$ norm
	\begin{multline}
		\label{eq1.18}
		h^{-n}\int |x|^2 e^{-\frac{F_{\omega}(x)}{2h}} L(dx) = h^{-n}\int_{\abs{x}\leq \omega} |x|^2 e^{-\frac{|x|^2}{2h}} L(dx) + h^{-n}\int_{\abs{x}\leq \omega} |x|^2 e^{-\frac{\omega|x|}{2h}} L(dx) \\
		\leq \OO(1)h + \OO(1)h \frac{h^{n+1}}{\omega^{2n+2}} = \OO(h),\qquad\qquad\qquad\qquad
	\end{multline}
	since $\dfrac{h}{\omega^2} = h^{\frac{2}{s} -1}\leq 1$ if $1 < s \leq 2$. The estimate (\ref{eq1.15}) therefore follows, and combining it with (\ref{eq1.8}), (\ref{eq1.11}), (\ref{eq1.10}), and (\ref{eq1.10.1}), we get for $u\in H_{\Phi_1}(\comp^n)$,
	\begin{equation}
		\label{eq1.19}
		a^w(x,hD_x)u(x) = \widetilde{a}(x,\xi_1(x))u(x) + \partial_{\theta}\, \widetilde{a}(x,\xi_1(x))\cdot (hD_x - \xi_1(x))u(x) + \widetilde{R}u,
	\end{equation}
	\begin{equation}
		\label{eq1.20}
		\text{with}\quad \widetilde{R} = {\cal O}(h): H_{\Phi_1}(\comp^n) \rightarrow L^2(\comp^n, e^{-2\Phi_1/h} L(dx)).
	\end{equation}
	
	\medskip
	\noindent
	The discussion above, developed in the case $1 < s \leq 2$, extends to the complementary range $s > 2$. Indeed, in this case, an application of \cite[Theorem 1.2]{HiLaSjZeI} yields
	\begin{multline}
		\label{eq1.21}
		a^w_{\Gamma}(x,hD_x) - \widetilde{a}^w_{\Gamma^{\Phi_1}_{h^{1/2}}}(x,hD_x) = {\cal O}(1)\, \exp\bigl(- C^{-1} h^{-\frac{1}{2s-2}}\bigr):\\
		H_{\Phi_1}(\comp^n) \rightarrow L^2(\comp^n, e^{-2\Phi_1/h} L(dx)),\quad C>0.
	\end{multline}
	Here $\Gamma^{\Phi_1}_{h^{1/2}}(x)$ is the $2n$--dimensional Lipschitz contour defined as in {\rm (\ref{eq1.6})}, {\rm (\ref{eq1.7})}, with $\omega$ replaced by $h^{1/2} \geq \omega$. We have
	\begin{equation}
		\label{eq1.22}
		\widetilde{a}^w_{\Gamma^{\Phi_1}_{h^{1/2}}}(x,hD_x) = {\cal O}(1): H_{\Phi_1}(\comp^n) \rightarrow L^2(\comp^n, e^{-2\Phi_1/h} L(dx)).
	\end{equation}
	It is then easy to see that we still get (\ref{eq1.19}) for $s>2$.
	
	\medskip
	\noindent
	Using (\ref{eq1.19}) and arguing as in~\cite{Sj90},~\cite{HeSjSt},~\cite{Rouleux}, we get the following essentially well known result, see also Proposition 4.1 of~\cite{Rouleux}.
	\begin{prop}
		\label{Toeplitz_identity}
		Let $a\in {\cal G}^s_b(\Lambda_{\Phi_0})$, $s>1$, and let $\Phi_1 \in C^{1,1}(\comp^n)$ be such that {\rm (\ref{eq1.5})} holds. Let $\widetilde{a}\in {\cal G}^s_b(\comp^{2n})$ be an almost holomorphic extension of $a$ such that ${\rm supp}\, \widetilde{a} \subset \Lambda_{\Phi_0} + B_{{\bf C}^{2n}}(0,C_0)$, for some $C_0>0$. Let $\psi \in W^{1,\infty}(\comp^n) \Longleftrightarrow \psi \in L^{\infty}(\comp^n)$, $\nabla \psi \in L^{\infty}(\comp^n)$. We have for $u,v\in H_{\Phi_1}(\comp^n)$,
		\begin{multline}
			\label{eq1.23}
			(\psi\, a^w(x,hD_x)u,v)_{H_{\Phi_1}} = \int \psi(x) \widetilde{a}(x,\xi_1(x))u(x)\overline{v(x)} e^{-2\Phi_1(x)/h}\,L(dx) \\
			+ {\cal O}(h) \norm{u}_{H_{\Phi_1}}\norm{v}_{H_{\Phi_1}}.\qquad\qquad
		\end{multline}
	\end{prop}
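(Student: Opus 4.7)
My plan is to feed the local representation \eqref{eq1.19} directly into the left-hand side of \eqref{eq1.23} and treat each of the three resulting terms separately. The multiplicative term $\widetilde a(x,\xi_1(x)) u(x)$ contributes exactly the main term on the right-hand side of \eqref{eq1.23}, and the $\widetilde R u$ term is $\OO(h)\|u\|_{H_{\Phi_1}}\|v\|_{H_{\Phi_1}}$ by \eqref{eq1.20} together with the bound $\|\psi\overline{v}\|_{L^2(e^{-2\Phi_1/h})} \leq \|\psi\|_{\infty}\|v\|_{H_{\Phi_1}}$. Only the first order term in $hD_x - \xi_1(x)$ requires genuine work.

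For this term, the key identity is: since $\xi_{1,j}(x) = \frac{2}{i}\partial_{x_j}\Phi_1(x)$, for any holomorphic $u$ one has the pointwise relation
\[
\bigl(hD_{x_j} - \xi_{1,j}(x)\bigr) u(x) \cdot e^{-2\Phi_1(x)/h} = \frac{h}{i}\, \partial_{x_j}\bigl(u(x)\, e^{-2\Phi_1(x)/h}\bigr).
\]
Substituting this in and integrating by parts in $\partial_{x_j}$ over $\comp^n$ --- justified via a standard truncation argument using the pointwise bound $|u(x)|e^{-\Phi_1(x)/h} \leq \OO(h^{-n/2})\|u\|_{H_{\Phi_1}}$ from the mean value inequality for holomorphic functions --- and using $\partial_{x_j}\overline{v}=0$ by holomorphy of $v$, the first order term becomes
\[
-\frac{h}{i}\sum_{j}\int \partial_{x_j}\!\bigl[\psi(x)\,\partial_{\theta_j}\widetilde a(x,\xi_1(x))\bigr]\, u(x)\,\overline{v(x)}\, e^{-2\Phi_1(x)/h}\, L(dx).
\]

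It then remains only to establish that the bracket is uniformly bounded. By the chain rule one obtains $(\partial_{x_j}\psi)\,\partial_{\theta_j}\widetilde a(x,\xi_1(x))$, $\psi\,\partial_{x_j}\partial_{\theta_j}\widetilde a(x,\xi_1(x))$, $\psi\,(\partial_{x_j}\xi_{1,k})\,\partial_{\theta_k}\partial_{\theta_j}\widetilde a$, and $\psi\,(\partial_{x_j}\overline{\xi_{1,k}})\,\partial_{\overline{\theta_k}}\partial_{\theta_j}\widetilde a$. The first three are $L^\infty$-bounded by $\psi\in W^{1,\infty}$, $\widetilde a\in \GG^s_b$, and the $C^{1,1}$ hypothesis on $\Phi_1$ (which gives $\partial_{x_j}\xi_{1,k}\in L^\infty$). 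For the last, $(x,\xi_1(x))$ lies within $\OO(h^{1-1/s})$ of $\Lambda_{\Phi_0}$ by \eqref{eq1.5}, so the almost-holomorphic estimate \eqref{eq1.13} bounds $\partial_{\overline{\theta_k}}\partial_{\theta_j}\widetilde a$ there by $\OO\bigl(\exp(-C^{-1}h^{-1/s})\bigr) = \OO(1)$. A final Cauchy--Schwarz then delivers the desired $\OO(h)\|u\|_{H_{\Phi_1}}\|v\|_{H_{\Phi_1}}$ bound, completing the proof.

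The only real obstacle is bookkeeping: justifying the truncation argument for the complex integration by parts at infinity, and checking that the chain rule produces only $L^\infty$-bounded quantities despite the $C^{1,1}$ (rather than $C^2$) regularity of $\Phi_1$ and the merely almost-holomorphic nature of $\widetilde a$. Both are saved by exploiting the Gevrey almost-holomorphic estimate \eqref{eq1.13}, which gives control exponentially better than any polynomial in $h$ at distance $\OO(h^{1-1/s})$ from $\Lambda_{\Phi_0}$.
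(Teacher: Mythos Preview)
Your proposal is correct and is precisely the standard argument that the paper points to by citing \cite{Sj90}, \cite{HeSjSt}, \cite{Rouleux} immediately after \eqref{eq1.19}; the paper does not spell out the proof of Proposition~\ref{Toeplitz_identity} beyond that reference, and what you have written is exactly the expected fleshing out: plug in \eqref{eq1.19}, use the identity $(hD_{x_j}-\xi_{1,j})u\,e^{-2\Phi_1/h}=\tfrac{h}{i}\partial_{x_j}(ue^{-2\Phi_1/h})$ for holomorphic $u$, integrate by parts, exploit $\partial_{x_j}\overline{v}=0$, and check $L^\infty$-boundedness of the resulting coefficient via the $C^{1,1}$ hypothesis and \eqref{eq1.13}.

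One small remark: the pointwise bound you quote, $|u(x)|e^{-\Phi_1(x)/h}\leq \OO(h^{-n/2})\|u\|_{H_{\Phi_1}}$, is strictly speaking off by a factor $e^{\OO(h^{-1/s})}$ (since passing between the $\Phi_0$- and $\Phi_1$-norms costs this much), but this is inessential --- the truncation argument goes through using only that $|u\overline{v}|e^{-2\Phi_1/h}\in L^1$ (Cauchy--Schwarz) together with $|\nabla\chi_R|=\OO(1/R)$ supported on an annulus escaping to infinity, so no pointwise bound is actually needed.
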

	
	\noindent
	Let $b\in {\cal G}^s_b(\Lambda_{\Phi_0})$, $s>1$, and let us apply Proposition \ref{Toeplitz_identity} with $b^w(x,hD_x)v$ replacing $v$, for some $v\in H_{\Phi_1}(\comp^n)$, using also \eqref{eq1.23} for $b^w(x,hD_x)$. We obtain
	\begin{multline}
		\label{eq1.24}
		\int \psi(x) \widetilde{a}(x,\xi_1(x))\overline{\widetilde{b}(x,\xi_1(x))} u(x)\overline{v(x)} e^{-2\Phi_1(x)/h}\,L(dx)
		\\ = (\psi\, a^w(x,hD_x)u,b^w(x,hD_x)v)_{H_{\Phi_1}} + {\cal O}(h) \norm{u}_{H_{\Phi_1}}\norm{v}_{H_{\Phi_1}}.\qquad
	\end{multline}
	Here $\widetilde{b}\in {\cal G}^s_b(\comp^{2n})$ is an almost holomorphic extension of $b$, as above, and we have also used the fact that $b^w(x,hD_x) = {\cal O}(1): H_{\Phi_1}(\comp^n) \rightarrow H_{\Phi_1}(\comp^n)$, see \cite{HiLaSjZeI,HiLaSjZeII}.
	
	\medskip
	\noindent
	As an application of Proposition \ref{Toeplitz_identity} and \eqref{eq1.24}, we derive an elliptic estimate for future reference. Let us make an assumption on $a\in\GG_{\rm b}^s(\Lambda_{\Phi_0})$ that there exists a bounded open subset $U\subset\comp^n$ with a constant $C>0$ such that
	\begin{equation}
		\label{ellipticity 0}
		|a(x,\xi)| \geq 2/C,\quad (x,\xi)\in \Lambda_{\Phi_0},\ \,x\in\comp^n\setminus U.
	\end{equation}
	Under this assumption we have the following:
	\begin{prop}
		\label{prop:elliptic estimate}
		Suppose $a\in\GG_{\rm b}^s(\Lambda_{\Phi_0})$ satisfies \eqref{ellipticity 0} for some bounded open set $U\subset\CC^n$ and some $C>0$. Let $\widetilde{a}\in {\cal G}^s_b(\comp^{2n})$ be an almost holomorphic extension of $a$ as in Proposition \ref{Toeplitz_identity}, and let $\Phi_1 \in C^{1,1}(\comp^n)$ be such that \eqref{eq1.5} holds. Then there exists $h_0>0$ such that for all $0<h<h_0$ and $u\in H_{\Phi_1}$ we have
		\begin{equation}
			\label{Elliptic estimate}
			\int_{\comp^n\setminus U} |u(x)|^2 e^{-\frac{2\Phi_1(x)}{h}} L(dx) \leq \OO(1)\|a^w(x,hD_x)u\|_{H_{\Phi_1}(\comp^n)}^2 + \OO(h)\|u\|_{H_{\Phi_1}(\comp^n)}^2 .
		\end{equation}
	\end{prop}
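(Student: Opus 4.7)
The plan is to apply the Toeplitz-type identity \eqref{eq1.24} with $b=a$, $v=u$, and the trivial weight $\psi\equiv 1\in W^{1,\infty}(\comp^n)$, then extract a pointwise ellipticity lower bound for $|\widetilde{a}(x,\xi_1(x))|$ on $\comp^n\setminus U$ from the hypothesis \eqref{ellipticity 0}, and finally combine the two.

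Specifically, taking $b=a$, $v=u$, and $\psi\equiv 1$ in \eqref{eq1.24} produces the \emph{Weyl/symbolic square identity}
\begin{equation*}
\int_{\comp^n} |\widetilde{a}(x,\xi_1(x))|^2\,|u(x)|^2\,e^{-2\Phi_1(x)/h}\,L(dx) = \|a^w(x,hD_x)u\|_{H_{\Phi_1}}^2 + \OO(h)\,\|u\|_{H_{\Phi_1}}^2 .
\end{equation*}
Since the integrand on the left is pointwise non-negative, restricting the domain of integration to $\comp^n\setminus U$ yields
\begin{equation*}
\int_{\comp^n\setminus U} |\widetilde{a}(x,\xi_1(x))|^2\,|u(x)|^2\,e^{-2\Phi_1/h}\,L(dx) \leq \|a^w u\|_{H_{\Phi_1}}^2 + \OO(h)\,\|u\|_{H_{\Phi_1}}^2 .
\end{equation*}

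The next step is to transfer the ellipticity from $\Lambda_{\Phi_0}$ to the deformed contour $\{(x,\xi_1(x)): x\in\comp^n\}$. Writing $\xi_0(x)=(2/i)\partial_x\Phi_0(x)$, the almost-holomorphic extension $\widetilde{a}$ coincides with $a$ on $\Lambda_{\Phi_0}$, so \eqref{ellipticity 0} gives $|\widetilde{a}(x,\xi_0(x))|\geq 2/C$ for every $x\in\comp^n\setminus U$. Since $\widetilde{a}\in\GG^s_b(\comp^{2n})$ has uniformly bounded first derivatives and, by \eqref{eq1.5},
\begin{equation*}
|\xi_1(x)-\xi_0(x)| \leq 2\|\nabla(\Phi_1-\Phi_0)\|_{L^\infty(\comp^n)} \leq \OO(h^{1-1/s}),
\end{equation*}
a trivial mean-value argument gives $|\widetilde{a}(x,\xi_1(x))|\geq 1/C$ uniformly for $x\in\comp^n\setminus U$ and $0<h<h_0$, provided $h_0>0$ is chosen sufficiently small. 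Feeding this pointwise lower bound into the preceding display and absorbing the factor $C^{-2}$ into the constants produces exactly \eqref{Elliptic estimate}.

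No deep difficulty is expected in this argument: the identity \eqref{eq1.24}, which is the main technical input, is already available, and the only step requiring care is the ellipticity transfer in the paragraph above, where one must check that the $h^{1-1/s}$-sized deformation of the contour does not destroy the $2/C$ ellipticity margin supplied by \eqref{ellipticity 0}. This fixes the dependence of $h_0$ on the constant $C$ in \eqref{ellipticity 0} and on the $\GG^s_b$ seminorms of $\widetilde{a}$.
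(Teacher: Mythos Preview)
Your proposal is correct and follows essentially the same approach as the paper: apply \eqref{eq1.24} with $b=a$, $v=u$, $\psi\equiv 1$ to obtain the integral inequality \eqref{Toep Ellip}, then use the $\OO(h^{1-1/s})$ distance between $(x,\xi_1(x))$ and $\Lambda_{\Phi_0}$ together with the boundedness of $\nabla\widetilde{a}$ to transfer the ellipticity bound from $a$ on $\Lambda_{\Phi_0}$ to $\widetilde{a}$ on the deformed contour, and combine. The only cosmetic difference is that the paper phrases the distance estimate by citing \eqref{dist Lambda Phi_0}, whereas you compute $|\xi_1(x)-\xi_0(x)|$ directly; the content is identical.
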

	
	\begin{proof}
		It follows from Proposition \ref{Toeplitz_identity} and more specifically \eqref{eq1.24} that for $u\in H_{\Phi_1}(\comp^n)$,
		\begin{equation}
			\label{Toep Ellip}
			\int\! \left|\widetilde{a} (x,\xi_1(x) )\right|^2 \!|u(x)|^2 e^{-\frac{2\Phi_1(x)}{h}} L(dx) \leq \|a^w(x,hD_x)u\|_{H_{\Phi_1}(\comp^n)}^2 + \OO(h)\|u\|_{H_{\Phi_1}(\comp^n)}^2 .
		\end{equation}
		Recalling from \eqref{dist Lambda Phi_0} that $\dist((x,\xi_1(x)),\Lambda_{\Phi_0}) =\OO(1)h^{1-\frac{1}{s}}$, then for $h$ sufficiently small we have, in view of \eqref{ellipticity 0} and the fact that $\widetilde{a}\in\GG_{\rm b}^s(\CC^{2n})$, $\widetilde{a}\lvert_{\Lambda_{\Phi_0}}=a$,
		\[
		\left|\widetilde{a} (x,\xi_1(x)) \right| \geq 1/C,\quad x\in\comp^n\setminus U.
		\]
		Combining this with \eqref{Toep Ellip} we obtain \eqref{Elliptic estimate}.
	\end{proof}
	
	\medskip
	\noindent
	We finish this section by discussing the composition of semiclassical Gevrey operators in the real domain. It has been proved in \cite{Lascar1988} and \cite{Lascar1997} that for $a, b\in\GG_{\rm b}^s(\RR^{2n})$ one has $a^w(x,hD)\circ b^w(x,hD) = c^w(x,hD;h)$ where $c(\cdot;h)=a\# b \in\GG_{\rm b}^s(\RR^{2n})$. An alternative proof of this result has been provided in \cite[Section 3.3]{HiLaSjZeI} using contour deformations. For future reference, we note a slightly finer characterization of the composed symbol $c=a\#b$ than $c\in \GG_{\rm b}^s(\RR^{2n})$ as follows:
	\begin{prop}
		\label{prop:composed symbol}
		Let $a, b\in\GG_{\rm b}^s(\RR^{2n})$ be $h$-independent symbols for some $s>1$, and let $c^w(x,hD;h) = a^w(x,hD)\circ b^w(x,hD)$. Then the symbol $c$ satisfies
		\begin{equation}
			\label{eq:c=ab+hr}
			c(x,\theta;h) = a(x,\theta) b(x,\theta) + h r(x,\theta;h),\quad (x,\theta)\in\RR^{2n},
		\end{equation}
		for some $r(\cdot;h)\in\GG_{\rm b}^s(\RR^{2n})$.
	\end{prop}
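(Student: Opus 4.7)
The plan is to upgrade the known composition $c = a \# b \in \GG^s_b(\RR^{2n})$ to the refined statement $c = ab + hr$ with $r(\cdot;h) \in \GG^s_b(\RR^{2n})$ uniformly in $h$, by writing the Weyl product as its principal contribution $ab$ plus an exact ``integral of the derivative in $h$'' and then invoking the Gevrey composition theorem on each term of the integrand. The heuristic identity $a\#b = ab + (h/2i)\{a,b\} + \OO(h^2)$ is well known; the content of the proposition is that one can arrange the ``$\OO(h^2)$'' tail into an $h$ times a single Gevrey symbol, uniformly.

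Concretely, I would start from the formal Moyal representation
\[
a \#_h b \;=\; \exp\!\Bigl(\tfrac{ih}{2}\, A(D)\Bigr)(a \otimes b)\big|_{X_1 = X_2 = X}, \qquad A(D) := \sum_{j=1}^n \bigl(\partial_{\theta_j}^{(1)} \partial_{x_j}^{(2)} - \partial_{x_j}^{(1)} \partial_{\theta_j}^{(2)}\bigr),
\]
where $X_1,X_2$ are two independent copies of the phase space variable. The elementary identity
\[
\exp\!\Bigl(\tfrac{ih}{2}\, A(D)\Bigr) - 1 \;=\; \tfrac{ih}{2}\, A(D) \int_0^1 \exp\!\Bigl(\tfrac{ith}{2}\, A(D)\Bigr)\, dt,
\]
combined with the commutativity of $A(D)$ and its exponential and with the pointwise action
\[
A(D)(a\otimes b) \;=\; \sum_{j=1}^n \bigl[(\partial_{\theta_j} a)\otimes(\partial_{x_j} b) - (\partial_{x_j} a)\otimes(\partial_{\theta_j} b)\bigr],
\]
gives, after restricting to the diagonal, the exact identity
\[
c(X;h) - a(X)b(X) \;=\; \tfrac{ih}{2} \int_0^1 \sum_{j=1}^n \Bigl[(\partial_{\theta_j} a) \#_{th} (\partial_{x_j} b) - (\partial_{x_j} a) \#_{th} (\partial_{\theta_j} b)\Bigr](X)\, dt .
\]
This dictates the definition of $r(\cdot;h)$ as $\tfrac{i}{2}$ times the integrand on the right.

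Since $\partial_{\theta_j} a$, $\partial_{x_j} a$, $\partial_{\theta_j} b$, $\partial_{x_j} b$ all lie in $\GG^s_b(\RR^{2n})$, the composition theorem of \cite{Lascar1988, Lascar1997} or \cite[Section 3.3]{HiLaSjZeI}, applied at semiclassical parameter $th \in (0,1]$, produces Gevrey symbols whose seminorms are controlled by those of $a$ and $b$. Averaging over $t \in [0,1]$ preserves such bounds, and $r(\cdot;h) \in \GG^s_b(\RR^{2n})$ uniformly in $h \in (0,1]$ follows.

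The main obstacle I anticipate is verifying the \emph{uniformity} of the Gevrey seminorm bounds from the composition theorem with respect to the semiclassical parameter---this is implicit in the contour-deformation construction of \cite[Section 3.3]{HiLaSjZeI}, but needs to be tracked carefully. A secondary point is justifying the operator-exponential manipulation within the Gevrey (non-analytic) framework, where the Moyal series does not converge. This can be handled either by reinterpreting the identity directly on the contour-deformed oscillatory integral representation of \cite{HiLaSjZeI}, or by approximating $a, b$ by symbols with faster decay (or analytic extensions) for which the formal calculus is rigorous, and passing to the limit.
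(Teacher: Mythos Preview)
Your proposal is correct and takes a route closely related to, but packaged differently from, the paper's. Both hinge on the elementary identity $e^{i\sigma}-1=i\sigma\int_0^1 e^{it\sigma}\,dt$. The paper applies it on the Fourier side: after inserting a Gevrey cutoff $\chi$ near the diagonal in the oscillatory integral for $c$ (the tail $r_\chi$ being fractionally-exponentially small by \cite[Proposition~3.8]{HiLaSjZeI}), it rewrites the compactly supported piece via Parseval, extracts the leading term $ab$, and bounds each $\partial^\alpha$ of the integral remainder by controlling an $L^1$ Fourier norm through a Sobolev norm of fixed order, which is then estimated directly from the Gevrey regularity of $a$, $b$, and $\chi$. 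You instead lift the same identity to the operator level as the Duhamel formula $c-ab=\tfrac{ih}{2}\int_0^1\sum_j\bigl[(\partial_{\theta_j}a)\#_{th}(\partial_{x_j}b)-(\partial_{x_j}a)\#_{th}(\partial_{\theta_j}b)\bigr]\,dt$ and feed each summand back into the Gevrey composition theorem as a black box. Your route is shorter and more conceptual; the paper's is self-contained, since it does not re-invoke the full composition result on the remainder. The two technical points you flag are genuine but routine: uniformity in $th\in(0,1]$ is exactly the content of $a\#_h b\in\GG^s_b$ uniformly in $h$, with the pointwise product as the $th\to 0^+$ limit; and the Duhamel identity is a pointwise equality of bounded Fourier multipliers that transfers to $\GG^s_b$ symbols by approximation with compactly supported ones.
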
 
	
	\begin{proof}
		Let us first recall the following oscillatory integral representation of the composed symbol $c=a\#b$, see for instance \cite[Chapter 4]{zw2012}:
		\begin{multline}
			\label{eq:c repn}
			c(x,\theta;h) = \frac{1}{(\pi h)^{2n}}\! \int_{\RR^{4n}} e^{-\frac{2i}{h}\sigma(y_1,\eta_1;y_2,\eta_2)} \\
			a(x+y_1,\theta+\eta_1) b(x+y_2,\theta+\eta_2) \,dy_1 d\eta_1 dy_2 d\eta_2.
		\end{multline}
		Here $\sigma$ is the standard symplectic form on $\RR^{2n}$. Let $\chi\in\GG_{\rm c}^s(\RR^{4n})$ be a Gevrey cutoff function such that $\chi(Y)=1$ for $|Y|\leq 1$, $Y\in\RR^{4n}$, with $\supp\chi\subset B_{\RR^{4n}}(0,2)$, and let
		\begin{multline}
			\label{eq:rchi}
			r_\chi(x,\theta;h) = 
			\frac{1}{(\pi h)^{2n}}\! \int_{\RR^{4n}} e^{-\frac{2i}{h}\sigma(y_1,\eta_1;y_2,\eta_2)} (1-\chi(y_1,\eta_1,y_2,\eta_2)) \\
			a(x+y_1,\theta+\eta_1) b(x+y_2,\theta+\eta_2) \,dy_1 d\eta_1 dy_2 d\eta_2.
		\end{multline}
		It has been established in \cite[Proposition 3.8]{HiLaSjZeI} that for some $C>0$ uniformly in $h\in(0,1]$ we have for all $\alpha, \beta\in\NN^n$ and $(x,\theta)\in \RR^{2n}$,
		\begin{equation}
			\label{eq:rchi Gevrey}
			|\partial_x^\alpha \partial_\theta^\beta r_\chi(x,\theta;h)|\leq C^{1+|\alpha|+|\beta|} \alpha!^s \beta!^s \exp \left(-\frac{1}{\OO(1)} h^{-\frac{1}{s}}\right) .
		\end{equation}
		To analyze the term $c-r_\chi$, we consider the following more general integral: 
		\[
		I_\chi (x;h) = h^{-N/2} \int_{\RR^N} e^{iq(y)/h} \chi(y) a(x+y)\,dy ,
		\]
		where $q(y) = \frac{1}{2} Ay\cdot y$ is a real non-degenerate quadratic form on $\RR^N$, $a\in \GG_{\rm b}^s(\RR^N)$ and $\chi\in\GG_{\rm c}^s(\RR^N)$ satisfies $\chi(y)=1$ for $|y|\leq 1$. 
		
		\noindent
		By Parseval's formula, writing $\tau_x a(y) =a(x+y)$ we have 
		\[
		I_\chi (x;h) = C_A \int_{\RR^N} e^{-\frac{ih}{2} A^{-1}\eta\cdot\eta}\widehat{\chi \tau_x a}(\eta)\, d\eta,\quad C_A = \frac{(2\pi)^{-N/2}e^{i\frac{\pi}{4}\sgn A}}{\abs{\det A}^{1/2}}
		\]
		where $\sgn A$ is the signature of $A$, $\widehat{\chi \tau_x a}(\eta) = \int_{\RR^N} e^{-iy\cdot\eta} \chi(y) a(x+y) \,dy$ is the Fourier transform. Using $e^{i\sigma} = 1 + i\sigma\int_0^1 e^{it\sigma} dt$, $\sigma\in\RR$, we get by $\int_{\RR^N}\widehat{u}(\eta)\,d\eta = (2\pi)^N u(0)$,
		\begin{equation}
			\label{eq:Ichi}
			I_\chi (x;h) = C_A ((2\pi)^N a(x) + h I_{\chi,1}(x;h)),
		\end{equation}
		where $\displaystyle I_{\chi,1}(x;h) = -\frac{i}{2}\int_0^1 \int_{\RR^N} e^{-\frac{ith}{2}A^{-1}\eta\cdot\eta} (A^{-1}\eta\cdot\eta) \widehat{\chi \tau_x a}(\eta)\, d\eta \,dt$.
		
		\noindent
		To derive Gevrey estimates for $I_{\chi,1}(x;h)$, we observe that for every $\alpha\in\NN^N$,
		\begin{equation}
			\label{eq:Ichi1,u_xalpha}
			\begin{gathered}
				\partial_x^\alpha I_{\chi,1}(x;h) = \frac{i}{2}\int_0^1\! \int_{\RR^N} e^{-\frac{ith}{2}A^{-1}\eta\cdot\eta} \widehat{u_{x,\alpha}}(\eta)\, d\eta \,dt, \\
				\text{with}\quad u_{x,\alpha}(y) = (A^{-1}\partial_y\cdot\partial_y) (\chi(y)\partial^\alpha a(x+y)).
			\end{gathered}
		\end{equation}
		It follows that $\abs{	\partial_x^\alpha I_{\chi,1}(x;h)}\leq \frac{1}{2}\norm{\widehat{u_{x,\alpha}}}_{L^1(\RR^N)}$. By Cauchy--Schwarz inequality,
		\[
		\int_{\RR^N}\! \abs{\widehat{u}(\eta)}\,d\eta \leq \left(\!\int_{\RR^N}\! \abs{\widehat{u}(\eta)}^2 (1+|\eta|^2)^{k} d\eta\!\right)^{1/2}\!\! \left(\!\int_{\RR^N}\! (1+|\eta|^2)^{-k} d\eta \!\right)^{1/2} 
		\!\!= C_k \|u\|_{H^k(\RR^N)}
		\]
		for any $k>N/2$, $k\in\NN$. We obtain therefore $\abs{\partial_x^\alpha I_{\chi,1}(x;h)} \leq C \|u_{x,\alpha}\|_{H^k(\RR^N)}$ for some $k\in\NN$ and $C>0$ depending only on the dimension $N$. Recalling the definition of $u_{x,\alpha}$ given in \eqref{eq:Ichi1,u_xalpha} and the assumptions that $\chi\in\GG_{\rm c}^s$, $a\in\GG_{\rm b}^s$, we conclude that there exists $C>0$ uniformly in $h\in(0,1]$ such that
		\begin{equation}
			\label{eq:Ichi1 Gevrey}
			\abs{\partial_x^\alpha I_{\chi,1}(x;h)} \leq C^{1+|\alpha|} \alpha!^s ,\quad \forall \alpha\in\NN^N,\ x\in\RR^N .
		\end{equation} 
		Combining \eqref{eq:Ichi}, \eqref{eq:Ichi1 Gevrey} and applying the result to the integral representation of $c-r_\chi$, we obtain via a direct computation that 
		\[
		c(x,\theta;h)-r_\chi(x,\theta;h) = a(x,\theta) b(x,\theta) + h r_1(x,\theta;h),\quad(x,\theta)\in\RR^{2n},
		\]
		for some $r_1(\cdot;h)\in\GG_{\rm b}^s(\RR^{2n})$. This and \eqref{eq:rchi Gevrey}, together with equations \eqref{eq:c repn}, \eqref{eq:rchi}, imply the desired result \eqref{eq:c=ab+hr}.
	\end{proof}
	
	\section{Complex deformations and Proof of Theorem \ref{thm:special case}}
	\label{sec:complex deformation}
	
	Let $p\in\GG_{\rm b}^s(\RR^{2n})$, $s>1$. Let $z_0\in\partial\Sigma(p)\setminus\Sigma_\infty(p)$ satisfy conditions \eqref{special case}--\eqref{nontrapping special case}. We first recall from \cite[Lemma 4.2]{DeSjZw} the existence of an escape function:
	\begin{lemma}
		\label{lem:escape function special case}
		Let $p$ and $z_0$ be given as in Theorem \ref{thm:special case} such that \eqref{special case}--\eqref{nontrapping special case} hold. Then there exists $G\in C_{\rm c}^\infty (\RR^{2n};\RR)$ such that for some constant $c>0$ we have 
		\begin{equation}
			\label{escape G}
			H_{\Im p} G(\rho) < -c< 0,\quad \rho\in p^{-1}(z_0) .
		\end{equation}
	\end{lemma}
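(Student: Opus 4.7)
The plan is to construct $G$ as a finite sum of elementary terms
\[
G_T(\rho) := -\int_0^T \chi\bigl(\exp(tH_{\Im p})\rho\bigr)\, dt,\qquad \chi\in C_{\rm c}^\infty(\RR^{2n};[0,\infty)),\ T\in\RR
\]
(with the convention $\int_0^T = -\int_T^0$ for $T<0$). Differentiating along the flow yields the key identity
\[
H_{\Im p} G_T(\rho) = \chi(\rho) - \chi\bigl(\exp(TH_{\Im p})\rho\bigr),
\]
so $G_T$ contributes negatively at $\rho$ whenever its cutoff is larger at the endpoint $\exp(TH_{\Im p})\rho$ than at $\rho$. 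Two geometric facts drive the construction: first, $K := p^{-1}(z_0)$ is compact, since $z_0\notin\Sigma_\infty(p)$; second, $\Re p\equiv 0$ on $K$, because $z_0\in i\RR$ and $\Re p\geq 0$ near $K$ by \eqref{special case}. In particular, any cutoff supported in $\{\Re p > 0\}$ automatically vanishes on $K$.

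For each $\rho\in K$, the nontrapping condition \eqref{nontrapping special case} supplies a time $T_\rho\in\RR$ with $\Re p(\exp(T_\rho H_{\Im p})\rho) > 0$; by continuity of the Hamilton flow and of $\Re p$, there is an open neighborhood $U_\rho$ of $\rho$ such that $\exp(T_\rho H_{\Im p})(\overline{U_\rho}) \subset \{\Re p > 0\}$. I would then choose $\chi_\rho \in C_{\rm c}^\infty(\RR^{2n};[0,\infty))$ with $\supp \chi_\rho \subset \{\Re p > 0\}$ and $\chi_\rho \geq 1$ on $\exp(T_\rho H_{\Im p})(\overline{U_\rho})$. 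The compactness of $K$ allows extraction of a finite subcover $U_1, \ldots, U_N$, with associated times $T_1, \ldots, T_N$ and cutoffs $\chi_1, \ldots, \chi_N$.

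Since $p\in\GG^s_{\rm b}$ has bounded derivatives, $H_{\Im p}$ is a complete $C^\infty$ vector field and each $\exp(tH_{\Im p})$ is a global diffeomorphism of $\RR^{2n}$; this ensures $G_j := G_{T_j}$ built with $\chi_j$ lies in $C_{\rm c}^\infty(\RR^{2n};\RR)$, with support contained in the continuous image $\bigcup_{t\in[\min(0,T_j),\max(0,T_j)]} \exp(-tH_{\Im p})(\supp \chi_j)$ of a compact set. Setting $G := \sum_{j=1}^N G_j$ and using the displayed identity together with $\chi_j\lvert_K \equiv 0$, we obtain
\[
H_{\Im p} G(\rho) = -\sum_{j=1}^N \chi_j\bigl(\exp(T_j H_{\Im p})\rho\bigr),\qquad \rho\in K.
\]
For any $\rho\in K$, picking $k$ with $\rho\in U_k$ forces $\chi_k(\exp(T_k H_{\Im p})\rho) \geq 1$, so $H_{\Im p} G(\rho) \leq -1$ on $K$, giving \eqref{escape G} with, say, $c = 1/2$. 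The construction is standard in the theory of escape functions; the only minor technical point is to verify the flow identity above for both signs of $T$, which is a routine substitution in the defining integral.
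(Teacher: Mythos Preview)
Your construction is correct and is the standard averaging-along-the-flow argument for escape functions. The paper does not supply its own proof of this lemma; it simply cites \cite[Lemma 4.2]{DeSjZw}, so your proposal in fact fills in what the paper omits. The argument you give is essentially the one underlying that reference: build local contributions $G_j$ whose Hamilton derivative telescopes to $\chi_j(\rho)-\chi_j(\exp(T_jH_{\Im p})\rho)$, arrange $\supp\chi_j\subset\{\Re p>0\}$ so that $\chi_j\equiv 0$ on $K=p^{-1}(z_0)$, and use compactness of $K$ plus the nontrapping hypothesis to cover $K$ by finitely many $U_j$ on which the endpoint term is $\geq 1$.
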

	
	\noindent
	We now introduce small compactly supported deformations of the real space $\RR^{2n}$. Let $H_G(\rho) = (G_\xi'(\rho),-G_x'(\rho))$ be the Hamilton vector field of $G$, and set
	\begin{equation}
		\label{Lambda tG}
		\Lambda_{tG} := \{\rho + it H_{G}(\rho) ; \rho\in T^* \RR^n\} \subset \CC^{2n},\quad t\in\RR,\ \, |t|\ \text{small}.
	\end{equation}
	We note that $\Lambda_{tG}$ is I-Lagrangian and R-symplectic, in the sense that $\sigma\lvert_{\Lambda_{tG}}$ is real and non-degenerate, where $\sigma=d\xi\wedge dx$ is the complex symplectic form on $\comp_x^n\times\comp_\xi^n$. Let $\ptilde\in\GG_b^s(\CC^{2n})$ be an almost holomorphic extension of $p$, which is supported in a bounded tubular neighborhood of $\RR^{2n} \subset \CC^{2n}$. We shall explore the behavior of $\ptilde\lvert_{\Lambda_{tG}}$ near some $\rho\in {\rm neigh}\,(p^{-1}(z_0);\RR^{2n})$. 
	
	\medskip
	\noindent
	For simplicity, we may assume $z_0=0$ by subtracting $z_0$ from $p$. Let us use Taylor's formula to expand $\ptilde$ at $\rho\in {\rm neigh}\,(p^{-1}(0);\RR^{2n})$ and write 
	\[
	\begin{split}
		\ptilde(\rho+itH_G(\rho)) &= p(\rho) + it H_G \ptilde(\rho) + \OO(t^2) \\ 
		&= p(\rho) - it H_{\Re p}G(\rho) + t H_{\Im p}G(\rho) + \OO(t^2),
	\end{split}
	\]
	where we used the facts that $\ptilde\lvert_{\RR^{2n}}=p$, $\overline{\partial}\ptilde\, \lvert_{\RR^{2n}}=0$. We obtain therefore
	\[
	\Re \ptilde(\rho+itH_G(\rho)) = \Re p(\rho) + t H_{\Im p}G(\rho) + \OO(t^2).
	\]
	Letting $\Omega\subset\RR^{2n}$ be a sufficiently small neighborhood of $p^{-1}(0)$ which is compact in $\RR^{2n}$ since $0\notin \Sigma_\infty(p)$, we can infer from \eqref{special case}, \eqref{escape G} and the computation above that there exist $\gamma>0$ and $t_0>0$ such that
	\begin{equation}
		\label{Re p on Lambda tG}
		\Re \ptilde(\rho+itH_G(\rho)) \geq \gamma|t|,\quad \rho\in\Omega,\ \,-t_0 < t \leq 0.
	\end{equation}
	
	\noindent
	Let us now move to the FBI transform side. Let $\phi(x,y)$ be a holomorphic quadratic form on $\CC_x^n\times\CC_y^n$ with $\Im \phi_{yy}'' >0$, $\det \phi_{xy}''\neq 0$. To $\phi$ we associate the complex linear canonical transformation
	\begin{equation}
		\label{canonical trans}
		\kappa_\phi :\CC^{2n}\owns (y,-\phi_y'(x,y))\mapsto (x,\phi_x'(x,y))\in \CC^{2n}.
	\end{equation}
	Recalling for instance \cite[Theorem 13.5]{zw2012} we have
	\[
	\kappa_\phi(\RR^{2n}) = \Lambda_{\Phi_0},\quad \Lambda_{\Phi_0}:=\left\{ \left( x,\frac{2}{i}\frac{\partial\Phi_0}{\partial x} (x) \right) ; x\in\CC^n \right\} \subset \CC^{2n},
	\]
	where
	\begin{equation}
		\label{eq:Phi0}
		\Phi_0(x) = \max_{y\in\RR^n}\,-\Im\phi(x,y)
	\end{equation}
	is a strictly plurisubharmonic quadratic form on $\CC^n$. 
	
	\noindent
	Let us also recall from \cite[Section 2]{Sj10} that
	\begin{equation}
		\label{Lambda Phi_t}
		\kappa_\phi(\Lambda_{tG}) = \Lambda_{\Phi_t} = \left\{ \left( x,\frac{2}{i}\frac{\partial\Phi_t}{\partial x} (x) \right) ; x\in\CC^n \right\} \subset \CC^{2n},
	\end{equation}
	\begin{equation}
		\label{Phi_t critical value}
		\text{with}\quad\Phi_t(x) = \text{v.c.}_{(y,\eta)\in\CC^n\times\RR^n} (-\Im\phi(x,y)-\eta\cdot\Im y + tG(\Re y,\eta)).
	\end{equation}
	Computing the critical value in $\eqref{Phi_t critical value}$ as a perturbation of $t=0$, we obtain 
	\begin{equation}
		\label{Phi_t perturbs Phi_0}
		\Phi_t(x) = \Phi_0(x) + tG \left( \kappa_\phi^{-1} \left( x,\frac{2}{i}\frac{\partial\Phi_0}{\partial x} (x) \right) \right) + \OO(t^2).
	\end{equation}
	To see this, we first observe by comparing \eqref{eq:Phi0} and \eqref{Phi_t critical value} that $\Phi_t(x)\lvert_{t=0} = \Phi_0(x)$ since
	\[
	\text{v.c.}_{(y,\eta)\in\CC^n\times\RR^n} (-\Im\phi(x,y)-\eta\cdot\Im y) = \max_{y\in\RR^n}\,-\Im\phi(x,y),
	\]
	and that there is a single critical point $(y_0,\eta_0) = \kappa_\phi^{-1} \bigl( x,\frac{2}{i}\frac{\partial\Phi_0}{\partial x} (x) \bigr)\in\RR^{2n}$ due to the non-degeneracy of $\Im \phi_{yy}''$. It follows that the critical value in \eqref{Phi_t critical value} is also evaluated at a single critical point when $t$ is small. Let us then differentiate the critical value $\Phi_t(x)$ in the parameter $t$, in view of \eqref{Phi_t critical value}, we get 
	\[
	(\partial_t \Phi_t(x))\lvert_{t=0} = G(\Re y_0,\eta_0) = G\left( \kappa_\phi^{-1} \left( x,\frac{2}{i}\frac{\partial\Phi_0}{\partial x} (x) \right) \right),
	\]
	where we noticed that $y_0\in\RR^n$. We obtain therefore \eqref{Phi_t perturbs Phi_0} by Taylor's formula.
	
	\noindent
	Let us introduce the FBI-Bargmann transform associated to $\phi$:
	\begin{equation}
		\label{T FBI defn}
		Tu(x;h) = C_n h^{-3n/4} \int_{\RR^n} e^{i\phi(x,y)/h} u(y) dy,
	\end{equation}
	where $C_n$ is chosen such that $T: L^2(\RR^n) \to H_{\Phi_0}(\CC^n)$ is unitary, see \cite[Theorem 13.7]{zw2012}. Let $a = p\circ \kappa_\phi^{-1}\in\GG_b^s(\Lambda_{\Phi_0})$, then $\widetilde{a} := \widetilde{p}\circ\kappa_\phi^{-1} \in \GG_b^s(\CC^{2n})$ is an almost holomorphic extension of $a$ such that $\supp \,\widetilde{a}\subset \Lambda_{\Phi_0} + B_{\comp^{2n}}(0,C_0)$ for some $C_0>0$. The exact Egorov theorem (see for example \cite[Theorem 13.9]{zw2012}) implies that
	\[
	a^w(x,hD_x)\circ T = T\circ p^w (x,hD) ,
	\]
	where $a^w(x,hD_x)$ is the semiclassical Weyl quantization of $a\in\GG_{\rm b}^s(\Lambda_{\Phi_0})$ given by \eqref{eq1.3} while $p^w(x,hD)$ is the semiclassical Weyl quantization of $p\in\GG_{\rm b}^s(\RR^{2n})$ defined in \eqref{eq:Weyl quantization}.
	
	\medskip
	\noindent
	Let us set $t = - \epsilon h^{1-\frac{1}{s}}$ for $\epsilon>0$ small but independent of $h$. In view of \eqref{Phi_t perturbs Phi_0}, we see that \eqref{eq1.5} holds with $\Phi_t$ replacing $\Phi_1$, if $\epsilon>0$ is small enough. Applying \eqref{eq1.23} with $\psi\equiv 1$ and the weight $\Phi_t$, we get
	\begin{equation}
		\label{Toeplitz 1}
		\begin{split}
			\Re( a^w(x,hD_x) u, u)_{H_{\Phi_t}} &= \int  \Re\widetilde{a}\left( x, \frac{2}{i}\frac{\partial\Phi_t}{\partial x} (x)\right) |u(x)|^2 e^{-2\Phi_t(x)/h}\, L(dx) \\
			& \quad\quad +\OO(h) \|u\|_{H_{\Phi_t}}^2.
		\end{split}
	\end{equation}
	Let $\Omega\subset\RR^{2n}$ be an open neighborhood of $p^{-1}(0)$ such that \eqref{Re p on Lambda tG} holds. We set
	\begin{equation}
		\label{def:U}
		U = \pi_x(\kappa_\phi(\{ \rho + it H_G(\rho) : \rho \in \Omega \}))\subset\CC^n,
	\end{equation}
	where $\pi_x : \Lambda_{\Phi_t}\owns(x,\xi)\mapsto x\in\CC^n$ is the projection map, and $\kappa_\phi$ is the canonical transform defined in \eqref{canonical trans}. We note that $U$ is open and bounded since $p^{-1}(0)\subset\RR^{2n}$ is compact, which is due to the assumption $0\notin \Sigma_\infty(p)$. Combing \eqref{Re p on Lambda tG} with \eqref{Lambda tG}, \eqref{Lambda Phi_t}, and recalling $\widetilde{a} = \widetilde{p}\circ\kappa_\phi^{-1}$, we obtain for some $\gamma>0$, $t_0>0$ as in \eqref{Re p on Lambda tG},
	\begin{equation}
		\label{Re atilde on U}
		\Re \widetilde{a}\left(x,\frac{2}{i}\frac{\partial\Phi_t}{\partial x}(x)\right) \geq \gamma|t|,\quad x\in U,\ \,-t_0 < t \leq 0.
	\end{equation}
	We shall next show that $a=p\circ\kappa_\phi^{-1}$ satisfies the condition \eqref{ellipticity 0} with $U$ given in \eqref{def:U}. To this end, we first note that
	\begin{equation*}
		K_0 := \{x\in\CC^n : a(x,\xi)=0,\ (x,\xi)\in\Lambda_{\Phi_0} \} = \pi_x(\kappa_\phi(p^{-1}(0))) \subset\CC^n\ \,\text{is compact}.
	\end{equation*}
	Since $\pi_x : \Lambda_{\Phi_t}\to\CC^n$, $-t_0<t\leq 0$, $\kappa_\phi:\RR^{2n}\to\Lambda_{\Phi_0}$ or $\Lambda_{tG}\to\Lambda_{\Phi_t}$, and $g_t : \RR^{2n}\owns\rho\mapsto \rho+itH_G(\rho)\in\Lambda_{tG}$ are all diffeomorphisms between corresponding submanifolds of $\CC^{2n}$, we get that $U_0:=\pi_x(\kappa_\phi(\Omega))\subset\CC^n$ is an open neighborhood of $K_0$ and that $U$ defined in \eqref{def:U} is open. To see $K_0\subset U$, it suffices to show that $\dist(\partial U , K_0)>0$ (note that $U$ is bounded). Let $x_t\in\partial U$, $x_t = \pi_x(\kappa_\phi(\rho+itH_G(\rho)))$ for some $\rho\in\partial\Omega$. Since $x=\pi_x(\kappa_\phi(\rho))\in\partial U_0$ and $K_0\subset U_0$, we have $\dist(x , K_0)\geq \dist(\partial U_0 , K_0) =: \delta_0 > 0$. Noting that $|x_t-x| = \OO(|t|)$, we get therefore $\dist(x_t , K_0)\geq \delta_0/2$ for all $-t_0<t\leq 0$ by taking a smaller $t_0$ if necessary. We conclude from the above discussion that 
	\begin{equation}
		\label{a outside U}	
		|a(x,\xi)| \geq 2/C,\quad(x,\xi)\in\Lambda_{\Phi_0},\ x\in\CC^n\setminus U,\quad -t_0<t\leq 0, 
	\end{equation}
	for some $C>0$ uniform in $-t_0<t\leq 0$ (noting that $U$ depends on $t$). 
	
	\medskip
	\noindent
	We now take $t=-\epsilon h^{1-\frac{1}{s}}$ with $\epsilon>0$ small but fixed. Combining \eqref{Toeplitz 1}, \eqref{Re atilde on U} we get
	\[
	\begin{split}
		\Re( a^w(x,hD_x) u, u)_{H_{\Phi_t}} \geq\, &{ } \gamma\epsilon h^{1-\frac{1}{s}} \int_U |u(x)|^2 e^{-2\Phi_t(x)/h}\, L(dx) - \OO(h) \|u\|_{H_{\Phi_t}}^2 \\
		&- \OO(1)\int_{\comp^n\setminus U} |u(x)|^2 e^{-2\Phi_t(x)/h}\, L(dx).
	\end{split}	
	\]
	In view of \eqref{a outside U} we apply Proposition \ref{prop:elliptic estimate} with $\Phi_t$ in place of $\Phi_1$, we obtain therefore
	\[
	\begin{gathered}
		\Re ( a^w(x,hD_x) u, u)_{H_{\Phi_t}} \geq (\gamma\epsilon h^{1-\frac{1}{s}}-\OO(h))\|u\|_{H_{\Phi_t}}^2 - \OO(1)\int_{\comp^n\setminus U} |u(x)|^2 e^{-2\Phi_t(x)/h} L(dx) \\
		\qquad\qquad \geq (\gamma\epsilon h^{1-\frac{1}{s}}-\OO(h))\|u\|_{H_{\Phi_t}}^2 - \OO(1)\|a^w(x,hD_x) u\|_{H_{\Phi_t}}^2 - \OO(h)\|u\|_{H_{\Phi_t}}^2 \\
		\geq \delta h^{1-\frac{1}{s}} \|u\|_{H_{\Phi_t}}^2 - \OO(1)\|a^w(x,hD_x) u\|_{H_{\Phi_t}}^2 ,\quad 0<h<h_0.\quad
	\end{gathered}
	\] 
	Here $h_0,\,\delta>0$ are small constants. By Peter--Paul inequality,
	\[
	\Re ( a^w(x,hD_x) u, u)_{H_{\Phi_t}} \leq \frac{\delta}{2}h^{1-\frac{1}{s}} \|u\|_{H_{\Phi_t}}^2 + \frac{1}{2\delta}h^{\frac{1}{s}-1} \|a^w(x,hD_x) u\|_{H_{\Phi_t}}^2 .
	\]
	Combining the estimates above, we conclude that for some $C>0$,
	\begin{equation}
		\label{zero not in Spec}
		\|a^w(x,hD_x) u\|_{H_{\Phi_t}} \geq C^{-1} h^{1-\frac{1}{s}} \|u\|_{H_{\Phi_t}},\quad u\in H_{\Phi_t},\  0<h<h_0 .
	\end{equation}
	Writing $P(h) = p^w(x,hD) + h p_1^w(x,hD;h)$ for some $p_1(\cdot;h)\in \GG_{\rm b}^s(\RR^{2n})$, we get 
	\[
	T  P(h)  T^{-1} = a^w(x,hD_x) + h a_1^w(x,hD_x;h)
	\] 
	with $a_1(\cdot;h)\in\GG_{\rm b}^s(\Lambda_{\Phi_0})$. We can infer from the proofs in \cite{HiLaSjZeI} that 
	\[
	a_1(\cdot;h)\in\GG_{\rm b}^s(\Lambda_{\Phi_0}) \implies a_1^w(x,hD_x;h) = \OO(1): H_{\Phi_t}\to H_{\Phi_t} .
	\]
	It follows that there exists $C>0$ uniformly in $0<h<h_0$ such that
	\begin{equation}
		\label{estimate P(h)-z}
		\|T (P(h) - z)T^{-1} u\|_{H_{\Phi_t}} \geq (2C)^{-1} h^{1-\frac{1}{s}}\|u\|_{H_{\Phi_t}},\quad u\in H_{\Phi_t},\ |z|\leq (2C)^{-1}h^{1-\frac{1}{s}}.
	\end{equation} 
	This estimate implies that $P(h)-z: L^2(\RR^n)\to L^2(\RR^n)$ is injective, as we note that $H_{\Phi_t}=H_{\Phi_0}$ as linear spaces and that $T:L^2(\RR^n) \to H_{\Phi_0}$ is unitary. Since $0\notin \Sigma_\infty(p)$, it has been shown in the proof of \cite[Proposition 3.3]{DeSjZw} that $P(h)-z:L^2\to L^2$ is a Fredholm operator with index $0$ for $z$ in an $\OO(1)$--neighborhood of $0$ and $h$ sufficiently small. We can therefore conclude that for some $C>0$ and $h_0>0$, 
	\[
	\left\{ z\in\comp : |z|< C^{-1} h^{1-\frac{1}{s}} \right\} \cap \sigma(P(h)) = \emptyset,\quad 0<h<h_0.
	\]
	Recalling $\|\Phi_t-\Phi_0\|_{L^\infty(\comp^n)} \leq C^{-1} h^{1-\frac{1}{s}}$ and \eqref{eq1.2}, we deduce from \eqref{estimate P(h)-z} that
	\[
	\|T (P(h) - z)T^{-1} u\|_{H_{\Phi_0}} \geq C^{-1} h^{1-\frac{1}{s}} e^{-\OO(1)h^{-1/s} }\|u\|_{H_{\Phi_0}},\quad u\in H_{\Phi_0},\ |z|\leq C^{-1}h^{1-\frac{1}{s}}.
	\]
	Combining this with the invertibility of $P(h)-z$ and the unitarity of $T$, we get 
	\[
	(P(h) - z)^{-1} = \OO(1) e^{\OO(1)h^{-1/s} } : L^2(\RR^n)\to L^2(\RR^n),\quad |z|\leq C^{-1}h^{1-\frac{1}{s}},
	\]
	for $0<h<h_0$. This completes the proof of Theorem \ref{thm:special case}.
	
	\section{Proof of Theorem \ref{main theorem}}
	\label{sec: prove Thm 1}
	
	We start with an adaption of \cite[Lemma 4.1]{DeSjZw} to the Gevrey class, which can be achieved by following the proof in \cite{DeSjZw} and using a division theorem for Gevrey functions given in \cite{Br}. In fact, we only need a special case of \cite[Theorem 5]{Br} (when $m=1$):
	\begin{prop}
		\label{prop:division thm}
		Let $\Omega$ be a domain in $\RR^d$ and let $f\in\GG^s(\RR\times\Omega;\comp)$. Let $P(t,x)=t+a(x)$ with $a\in\GG^s(\Omega;\comp)$. Then there exist $Q\in\GG^s(\RR\times\Omega;\comp)$ and $R\in\GG^s(\Omega;\comp)$ such that
		\begin{equation}
			f(t,x) = Q(t,x) P(t,x) + R(x),\quad (t,x)\in\RR\times\Omega .
		\end{equation}
	\end{prop}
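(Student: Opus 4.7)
The plan is to prove the proposition by writing down explicit Malgrange-type formulas for $Q$ and $R$ and then verifying Gevrey regularity using the stability of $\GG^s$ under composition. Evaluating the desired identity $f = QP + R$ formally at $t = -a(x)$ forces the choice
\begin{equation*}
R(x) := f(-a(x),x),\qquad Q(t,x) := \int_0^1 (\partial_t f)\bigl(-a(x) + \tau(t+a(x)),\,x\bigr)\,d\tau.
\end{equation*}
For real-valued $a$ these expressions are meaningful as they stand; for complex $a\in\GG^s(\Omega;\comp)$ I would interpret them via a Gevrey-$s$ almost holomorphic extension $\tilde f(z,x)$ of $f$ in the first variable, furnished by Carleson's theorem (the same ingredient used to justify \eqref{eq1.13}).

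The algebraic identity $f = QP + R$ is then verified by the fundamental theorem of calculus along the straight-line path $\gamma(\tau) = -a(x)+\tau(t+a(x))$ joining $-a(x)$ to $t$ in $\comp$:
\begin{equation*}
Q(t,x)\,(t+a(x)) = \int_0^1 \frac{d}{d\tau}\,\tilde f(\gamma(\tau),x)\,d\tau = \tilde f(t,x) - \tilde f(-a(x),x).
\end{equation*}
For real $a$ this equals $f(t,x) - R(x)$ exactly. For complex $a$ the $\tau$-derivative along $\gamma$ picks up an additional $\bar\partial_z\tilde f$ contribution, producing a Gevrey-negligible remainder $E_0 := f - QP - R$ controlled by the bound $|\bar\partial_z\tilde f| = \OO(\exp(-c|\Im z|^{-1/(s-1)}))$; this defect factors explicitly through $(t+\overline{a(x)})$, and is corrected by an iterative scheme exploiting the Gevrey decay of $\bar\partial_z\tilde f$ to yield the exact decomposition.

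Gevrey regularity of $Q$ and $R$ is the principal analytic content. Both quantities are compositions of $\tilde f$ (or $\partial_t\tilde f$) with Gevrey maps built from $a$ and the linear variables $(t,\tau)$, followed in the case of $Q$ by integration over the compact interval $[0,1]$. The decisive input is the classical closure of $\GG^s$ under composition, proved via the Fa\`a di Bruno formula together with Gevrey-type combinatorial estimates on sums over partitions of multi-indices, combined with the elementary fact that integration over a compact parameter interval preserves Gevrey bounds. The main obstacle is the careful bookkeeping of Gevrey constants through Fa\`a di Bruno, uniformly in $\tau\in[0,1]$, and, in the complex-coefficient case, the convergence of the iterative scheme correcting the almost-holomorphic defect. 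These are the technical points established in the proof of \cite[Theorem 5]{Br}.
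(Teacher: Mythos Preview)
The paper does not prove this proposition; it is simply quoted as the special case $m=1$ of \cite[Theorem~5]{Br} and used as a black box. Your sketch, by contrast, attempts a direct argument, and for real-valued $a$ it succeeds: the formulas $R(x)=f(-a(x),x)$ and $Q(t,x)=\int_0^1(\partial_t f)(-a(x)+\tau(t+a(x)),x)\,d\tau$ give an exact decomposition by the fundamental theorem of calculus, and Gevrey regularity follows from closure of $\GG^s$ under composition together with integration over $[0,1]$. That is a genuine simplification relative to invoking \cite{Br} in this easy case.

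The complex-valued case, however, has a real gap. Along $\gamma(\tau)=-a(x)+\tau(t+a(x))$ one has $\Im\gamma(\tau)=-(1-\tau)\Im a(x)$, so near $\tau=0$ the defect $\bar\partial_z\tilde f(\gamma(\tau),x)$ is of order $\exp\bigl(-c\,|\Im a(x)|^{-1/(s-1)}\bigr)$. When $|\Im a(x)|$ is bounded away from zero---precisely the regime needed in the application to Proposition~\ref{prop:preparation thm}, where $a(x,\lambda)=\lambda$ ranges over all of $\CC$---this quantity is $\OO(1)$, not small. Hence $E_0$ is of the same size as $f$, and your ``iterative scheme'' has no contraction: reapplying the construction to $E_0$ produces an error of the same order, with no mechanism for convergence offered. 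Moreover the proposed $Q$ and $R$ depend on the non-canonical choice of $\tilde f$. Your closing appeal to \cite[Theorem~5]{Br} is therefore doing all the work in the complex case, so the proposal does not improve on the paper there; division by a complex Gevrey symbol is a genuinely nontrivial input, and the almost-holomorphic heuristic, while suggestive, does not close.
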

	
	\noindent
	Following an argument in \cite{Nirenberg}, we can deduce a preparation theorem for Gevrey functions from the division theorem above.
	
	\begin{prop}
		\label{prop:preparation thm}
		Let $f(t,x)\in\GG^s(\RR\times\RR^d;\comp)$. Suppose that $f(0,0)=0$, $\frac{\partial f}{\partial t}(0,0)\neq 0$. Then in an open neighborhood of $(0,0)\in \RR\times\RR^d$ we have the factorization
		\begin{equation}
			f(t,x) = q(t,x) (t+\lambda(x))
		\end{equation}
		where $q$ and $\lambda$ are $\GG^s$ complex-valued functions with $q(0,0)\neq 0$, $\lambda(0)=0$.
	\end{prop}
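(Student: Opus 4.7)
The plan is to follow the Nirenberg-style approach: deduce the preparation theorem from the division theorem (Proposition~\ref{prop:division thm}) by introducing an auxiliary complex parameter and then applying a Gevrey implicit function theorem. The parameter $\alpha\in\comp$ plays the role of $\lambda(x)$ until it gets pinned down at the end.

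First, I would enlarge the base from $\RR^d$ to $\Omega':=\RR^d\times\RR^2$, identifying the additional $\RR^2$ factor with $\comp$ via $\alpha=\alpha_1+i\alpha_2$. The function $a(x,\alpha):=\alpha$ lies in $\GG^s(\Omega';\comp)$, and $f$, viewed as independent of $\alpha$, lies in $\GG^s(\RR\times\Omega';\comp)$. Applying Proposition~\ref{prop:division thm} produces $Q\in\GG^s(\RR\times\Omega';\comp)$ and $R\in\GG^s(\Omega';\comp)$ with
\[
f(t,x)=Q(t,x,\alpha)\,(t+\alpha)+R(x,\alpha),\qquad(t,x,\alpha)\in\RR\times\RR^d\times\RR^2.
\]
Setting $\alpha=0$ gives $R(x,0)=f(0,x)$ and $Q(0,x,0)=\partial_t f(0,x)$; in particular $R(0,0)=0$ and $Q(0,0,0)=\partial_t f(0,0)\neq 0$. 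Differentiating the identity in $\alpha$ at the origin yields $0=Q(0,0,0)+\partial_\alpha R(0,0)$, so $\partial_\alpha R(0,0)=-\partial_t f(0,0)\neq 0$.

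Next, the equation $R(x,\alpha)=0$ has the solution $(x,\alpha)=(0,0)$, and multiplication by the nonzero complex number $\partial_\alpha R(0,0)$ is an invertible real-linear map $\RR^2\to\RR^2$. A Gevrey implicit function theorem therefore produces a unique $\lambda\in\GG^s(V;\comp)$ on a neighborhood $V$ of $0\in\RR^d$ with $\lambda(0)=0$ and $R(x,\lambda(x))\equiv 0$ on $V$. Setting $q(t,x):=Q(t,x,\lambda(x))$, which is Gevrey by closure of $\GG^s$ under composition, and substituting $\alpha=\lambda(x)$ in the division identity, I obtain the desired factorization
\[
f(t,x)=q(t,x)\,(t+\lambda(x)),
\]
while $q(0,0)=Q(0,0,0)=\partial_t f(0,0)\neq 0$.

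The only nontrivial input is the Gevrey implicit function theorem for a complex-valued equation in a complex unknown; this reduces, by splitting into real and imaginary parts of $\alpha$ and $R$, to the real Gevrey implicit function theorem for a map $\RR^d\times\RR^2\to\RR^2$ whose Jacobian in the $\alpha$-directions has determinant $|\partial_\alpha R(0,0)|^2\neq 0$ at the origin. This is classical, following from the stability of the Gevrey classes under composition and inversion (e.g., via Fa\`a di Bruno type estimates applied to the Neumann series for the Jacobian inverse). Every other step is direct substitution or differentiation.
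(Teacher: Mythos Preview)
Your approach is the same as the paper's: introduce an auxiliary complex parameter, apply the Gevrey division theorem (Proposition~\ref{prop:division thm}), and then solve $R(x,\alpha)=0$ for $\alpha=\lambda(x)$ via the Gevrey implicit function theorem (the paper cites Komatsu). One step is underjustified: you assert that the real Jacobian of $\alpha\mapsto R(0,\alpha)$ at the origin is complex multiplication by $\partial_\alpha R(0,0)$, equivalently that its determinant equals $\abs{\partial_\alpha R(0,0)}^2$, but this needs the additional fact $\partial_{\bar\alpha}R(0,0)=0$, which is not automatic since the division theorem gives no holomorphy of $Q$ or $R$ in $\alpha$. The paper verifies this explicitly by applying $\partial_{\bar\lambda}$ to the division identity at the origin and using that $t+\lambda$ is holomorphic in $\lambda$; once you insert that one-line check, your argument is complete and coincides with the paper's.
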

	
	\begin{proof}
		Let us introduce a generic complex variable $\lambda\in\CC$. We note that $F(t,x,\lambda):=f(t,x)\in\GG^s(\RR_t\times\RR_x^d\times\CC_\lambda)$, $P(t,x,\lambda) := t + \lambda$, then by Proposition \ref{prop:division thm}, we have
		\begin{equation}
			\label{eq:f,Q,R}
			f(t,x) = Q(t,x,\lambda) (t+\lambda) + R(x,\lambda),
		\end{equation}
		for some $Q\in\GG^s(\RR_t\times\RR_x^d\times\CC_\lambda)$, $R\in \GG^s(\RR_x^d\times\CC_\lambda)$. Since $f(0,0)=0$ we must have $R(0,0)=0$, thus $\frac{\partial f}{\partial t}(0,0)\neq 0$ implies $Q(0,0,0)\neq 0$. Moreover, applying $\partial_{\overline{\lambda}}$ to \eqref{eq:f,Q,R} and taking $(t,x,\lambda)=(0,0,0)$, we get $\frac{\partial R}{\partial \overline{\lambda}}(0,0) = 0$, thus $\frac{\partial \overline{R}}{\partial\lambda}(0,0)=0$. Suppose $\frac{\partial R}{\partial\lambda}(0,0) = 0$, we deduce from \eqref{eq:f,Q,R} by applying $\partial_\lambda$ and taking $(t,x,\lambda)=(0,0,0)$ that $Q(0,0,0)=0$, a contradiction. Therefore, we have $\frac{\partial R}{\partial\lambda}(0,0) \neq 0$. We conclude from the discussion above that the Jacobian matrix $\frac{\partial (R,\overline{R})}{\partial(\lambda,\overline{\lambda})}(0,0)\neq 0$. It 
		follows from the implicit function theorem in the Gevrey-$s$ class by \cite{Komatsu} that there exists an open neighborhood $U\subset\RR^d$ of $0$ and $\lambda(x)\in\GG^s(U ; \CC)$ such that $\lambda(0)=0$, and $R(x, \lambda(x))=0$ for all $x\in U$. The desired factorization follows by letting $q(t,x):= Q(t,x,\lambda(x))\in\GG^s (\RR_t\times U_x)$.
	\end{proof}
	
	Following the proof of \cite[Lemma 4.1]{DeSjZw} while replacing the Malgrange preparation theorem used there by Proposition \ref{prop:preparation thm}, and using a $\GG^s$ (Gevrey-$s$) partition of unity, we obtain the following result:
	\begin{lemma}
		\label{Lemma 4.1 DSZ}
		Let $p\in\GG_b^s(\RR^{2n})$ and $z_0\in \partial\Sigma(p)$ such that \eqref{principal type condition} and \eqref{dynamical condition} hold. Then there exists $q\in\GG_b^s(\RR^{2n})$ such that $q\neq 0$ on $p^{-1}(z_0)$ and 
		\begin{equation}
			\label{property of q}
			\Im(q(p-z_0))\geq 0\ \text{near}\ p^{-1}(z_0),\quad |d\Re (q(p-z_0))|\geq c > 0\ \text{on}\ p^{-1}(z_0).
		\end{equation}
	\end{lemma}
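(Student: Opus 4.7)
The strategy is to adapt the proof of Lemma 4.1 in \cite{DeSjZw} to the Gevrey-$s$ category, using Proposition \ref{prop:preparation thm} in place of the classical Malgrange preparation theorem, the Gevrey implicit function theorem of \cite{Komatsu} in place of the smooth one, and the Gevrey partitions of unity guaranteed by non-quasianalyticity of $\GG^s$, $s>1$. The construction is local-to-global: I will first build $q$ near each point of the compact set $p^{-1}(z_0)$ (compact since $z_0\notin\Sigma_\infty(p)$), then patch.

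Fix $\rho_0 \in p^{-1}(z_0)$ and let $\theta_0 = \theta(\rho_0)$, so that $e^{-i\theta_0}dp(\rho_0)\in\RR^{2n}\setminus\{0\}$ by \eqref{principal type condition}; the boundary structure of $z_0\in\partial\Sigma(p)$ furnishes the one-sided condition $\Im(e^{-i\theta_0}(p-z_0))\geq 0$ near $\rho_0$ that is used also in the $C^\infty$ argument of \cite{DeSjZw}. Set $\tilde p := e^{-i\theta_0}(p-z_0)$. Since $d\Re\tilde p(\rho_0)\neq 0$, the Gevrey implicit function theorem yields $\GG^s$ coordinates $(t,x)\in \RR\times\RR^{2n-1}$ centered at $\rho_0$ in which $t = \Re \tilde p$, so that $\tilde p = t + ir(t,x)$ with $r\in \GG^s$ real, $r\geq 0$ near $0$, and $r(0,0)=dr(0,0)=0$. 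Because $\partial_t \tilde p(0,0)=1\neq 0$, Proposition \ref{prop:preparation thm} applies and gives the factorization
\[
\tilde p(t,x) = Q(t,x)\bigl(t + \lambda(x)\bigr)
\]
on a neighborhood of the origin, with $Q,\lambda \in \GG^s$, $Q(0,0)\neq 0$, $\lambda(0)=0$. Substituting $t=-\Re\lambda(x)$ and taking imaginary parts gives $\Re Q(-\Re\lambda(x),x)\,\Im\lambda(x) = r(-\Re\lambda(x),x) \geq 0$; after multiplying $Q$ by a unimodular constant so that $\Re Q > 0$ near $0$, this forces $\Im\lambda(x)\geq 0$. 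Defining the local multiplier $q_{\rho_0}(\rho) := e^{-i\theta_0}\,Q(t(\rho),x(\rho))^{-1}$, nonvanishing and $\GG^s$ near $\rho_0$, we obtain
\[
q_{\rho_0}(p-z_0) = t + \lambda(x),
\]
whose imaginary part $\Im\lambda(x)$ is $\geq 0$ near $\rho_0$, and whose real-part differential $dt + d\Re\lambda$ is nowhere zero near $\rho_0$ since $dt$ is transverse to the $dx_j$'s.

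Covering $p^{-1}(z_0)$ by finitely many such patches $V_j$ with associated multipliers $q_j = q_{\rho_j}$ and a $\GG^s$-partition of unity $\{\chi_j\}\subset\GG_{\rm c}^s(\RR^{2n})$ with $\supp\chi_j\subset V_j$ and $\sum_j \chi_j = 1$ near $p^{-1}(z_0)$, define $q := \sum_j \chi_j q_j \in \GG_{\rm b}^s(\RR^{2n})$ (extended by zero outside $\bigcup V_j$). Then $\Im(q(p-z_0)) = \sum_j \chi_j\,\Im(q_j(p-z_0))\geq 0$ near $p^{-1}(z_0)$ by pointwise positivity of each summand. \emph{The main obstacle}, and the only delicate point, is to verify that neither $q$ itself nor $d\Re(q(p-z_0))$ vanishes on $p^{-1}(z_0)$, because the local phases $e^{-i\theta_j}$ are only determined modulo $\pi$ and destructive interference in the sum is a priori possible. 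I would handle this by noting that along the compact set $p^{-1}(z_0)$ the phase $\theta(\rho)$ is a $\GG^s$ function with values in $\RR/\pi\Z$ (since $e^{-i\theta(\rho)}dp(\rho)\in\RR^{2n}\setminus\{0\}$ varies continuously and nonvanishingly), which allows the lifts $\theta_j$ on overlapping patches to be chosen compatibly. With such consistent choices, the restrictions of $q_j(p-z_0)$ to $p^{-1}(z_0)$ all share the common direction of $e^{-i\theta(\rho)}dp(\rho)$ up to positive real factors $(\Re Q_j)^{-1}$, and the convex $\chi_j$-weighted average therefore inherits both $q\neq 0$ and $|d\Re(q(p-z_0))|\neq 0$ pointwise on $p^{-1}(z_0)$; a compactness argument then upgrades these pointwise bounds to a uniform constant $c>0$, giving \eqref{property of q}.
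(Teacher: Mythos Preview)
Your proposal is correct and takes essentially the same approach as the paper: the paper's proof is literally the one sentence ``follow the proof of \cite[Lemma 4.1]{DeSjZw}, replacing the Malgrange preparation theorem by Proposition~\ref{prop:preparation thm} and using a $\GG^s$ partition of unity,'' and you have supplied a detailed implementation of exactly that program. One small redundancy: differentiating $\tilde p = Q(t+\lambda)$ in $t$ at the origin already forces $Q(0,0)=1$, so the unimodular normalization of $Q$ is unnecessary; and you are right to flag the compatibility of the lifts $\theta_j$ as the only delicate point in the patching---this is the same issue present in the $C^\infty$ argument of \cite{DeSjZw}, to which the paper simply defers.
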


	We are now ready to prove Theorem \ref{main theorem}. Let $p\in\GG^s_b(\RR^{2n})$, $s>1$, and let $p^w(x,hD)$ be the principal part of the semiclassical Gevrey operator $P(h)$. Suppose $z_0\in \partial\Sigma(p)\setminus \Sigma_\infty(p)$ satisfies conditions \eqref{principal type condition} and \eqref{dynamical condition}. Lemma \ref{Lemma 4.1 DSZ} shows that there exists $q\in\GG_b^s(\RR^{2n})$ such that $q\neq 0$ on $p^{-1}(z_0)$ and \eqref{property of q} holds. Using a $\GG^s$ partition of unity, we may assume further that there exists a constant $\gamma>0$ such that
	\begin{equation}
		\label{q nonvanishing}
		|q(x,\xi)|\geq \gamma>0,\quad (x,\xi)\in \RR^{2n}.
	\end{equation}
	This is due to the compactness of $p^{-1}(z_0)$. 
	
	\noindent
	Fixing $z_0$ as above, let us consider the semiclassical Gevrey operator 
	\begin{equation}
		\label{operato P_1}
		P_1(h) := i^{-1}q^w(x,hD) (P(h)-z_0)
	\end{equation}
	whose principal part is $p_1^w(x,hD)$ with $p_1 := i^{-1}q(p-z_0)$ by Proposition \ref{prop:composed symbol}. We next verify that $p_1$ satisfies conditions \eqref{special case}--\eqref{nontrapping special case}. Noting that $p_1^{-1}(0)= p^{-1}(z_0)$ by \eqref{q nonvanishing}, it then follows from \eqref{property of q} that 
	\begin{equation}
		\label{property p_1}
		\Re p_1 \geq 0\quad\text{near }\,p_1^{-1}(0);\qquad d\,\Im p_1 \neq 0\quad\text{on }\,p_1^{-1}(0) .
	\end{equation}
	Let $\rho_0\in p_1^{-1}(0) = p^{-1}(z_0)$. By a direct computation we see that one can obtain a trajectory of $H_{\Re (e^{-i\theta(\rho_0)} p)}$ passing through $\rho_0$ that is contained in $p^{-1}(z_0)$ from a trajectory of $H_{\Im p_1}$ passing through $\rho_0$ that is contained in $p_1^{-1}(0)$ simply by reparametrization. Therefore, the condition \ref{nontrapping special case} must hold for $p_1$ and $0$, i.e. 
	\begin{equation}
		\label{nontrapping p_1}
		\begin{gathered}
			\forall \rho\in p_1^{-1}(0),\quad \textrm{the maximal trajectory of $H_{\Im p_1}$ passing through $\rho$} \\
			\textrm{contains a point where $\Re p_1 > 0$} .
		\end{gathered}
	\end{equation}
	Otherwise, the condition \eqref{dynamical condition} on $p$ and $z_0$ would be contradicted. In view of \eqref{property p_1} and \eqref{nontrapping p_1}, repeating the steps to derive \eqref{zero not in Spec}, we can conclude that there exist $h_0>0$ and $C>0$ such that (with the unitary operator $T$ defined in \eqref{T FBI defn})
	\begin{equation}
		\label{p_1 estimate}
		\|T  p_1^w(x,hD) T^{-1} u\|_{H_{\Phi_t}} \geq C^{-1} h^{1-\frac{1}{s}} \|u\|_{H_{\Phi_t}},\quad u\in H_{\Phi_t},\  0<h<h_0 .
	\end{equation}
	By Proposition \ref{prop:composed symbol} we have for some $r(\cdot;h)\in\GG_{\rm b}^s(\RR^{2n})$, 
	\[
	p_1^w(x,hD) = i^{-1} q^w(x,hD) (p^w(x,hD) - z_0) + h r^w(x,hD;h).
	\]
	Noting that $q, r(\cdot;h)\in\GG_{\rm b}^s(\RR^{2n})\implies T q^w(x,hD) T^{-1}, T r^w(x,hD;h) T^{-1} = \OO(1): H_{\Phi_t}\to H_{\Phi_t}$, we deduce from \eqref{p_1 estimate} that for a larger $C>0$ we have
	\[
	\|T  (p^w(x,hD) - z_0)  T^{-1} u\|_{H_{\Phi_t}} \geq C^{-1} h^{1-\frac{1}{s}} \|u\|_{H_{\Phi_t}},\quad u\in H_{\Phi_t},\  0<h<h_0 .
	\]
	Arguing as in the proof of Theorem \ref{thm:special case}, we conclude from the estimate above that
	\begin{equation*}
		\left\{z\in\comp : |z-z_0| < C^{-1} h^{1-\frac{1}{s}} \right\} \cap \sigma(P(h)) = \emptyset,\quad 0<h<h_0 ,
	\end{equation*}
	and that the resolvent estimate \eqref{resolvent estimate} holds for $z$ in the spectrum free region above.

\end{document}